\numberwithin{equation}{section}
\renewcommand{\geq}{\geqslant}
\renewcommand{\ge}{\geqslant}
\renewcommand{\leq}{\leqslant}
\renewcommand{\le}{\leqslant}
\theoremstyle{plain}
\newtheorem{THEOREM}{Theorem}[section]
\newtheorem{theorem}[THEOREM]{Theorem}
\newtheorem{corollary}[THEOREM]{Corollary}
\newtheorem{lemma}[THEOREM]{Lemma}
\theoremstyle{definition}
\theoremstyle{remark}
\newtheorem{remark}[THEOREM]{Remark}
\newtheorem{example}[THEOREM]{Example}
\newcommand{\thm}[1]{Theorem~\ref{#1}}
\newcommand{\lem}[1]{Lemma~\ref{#1}}
\newcommand{\cor}[1]{Corollary~\ref{#1}}
\def \a {\alpha} 
\def \d {\delta}
\def \g {\gamma}
\def \e {\varepsilon}
\def \L {\Lambda}
\def \r {\rho}
\def \th {\theta}
\def \cL {\mathcal{L}}
\def \cM {\mathcal{M}}
\def \cP {\mathcal{P}}
\def \cF {\mathcal{F}}
\def \aL {{\mathcal L}}
\def \aT {{\mathcal T}}
\newcommand{\Z}{\ensuremath{\mathbb{Z}}}   
\newcommand{\R}{\ensuremath{\mathbb{R}}}   
\newcommand{\T}{\ensuremath{\mathbb{T}}}   
\def \ra {\rightarrow}
\def \rmax  {\rho_+}
\def \rmin {\rho_-}
\def \umax  {u_+}
\def \umin {u_-}
\def \xmax {x_+}
\def \xmin {x_-}
\def \dx  {\, \mbox{d}x}
\def \dy  {\, \mbox{d}y}
\def \dz  {\, \mbox{d}z}
\def \ds  {\, \mbox{d}s}
\def \dw  {\, \mbox{d}w}
\def \dth  {\, \mbox{d}\th}
\def \ddt  {\frac{\mbox{d\,\,}}{\mbox{d}t}}
\def \DDt  {\frac{\mbox{D\,\,}}{\mbox{D}t}}
\def \dD  {\mbox{D}}
\def \aL {\mathcal{L}}
\newcommand{\supr}{\text{supp}\ensuremath{\,\rho(\cdot,t)}}   
\begin{document}

\title[Eulerian dynamics with a commutator forcing]{Eulerian dynamics with \\ a commutator forcing II: flocking}

\author{Roman Shvydkoy}
\address{Department of Mathematics, Statistics, and Computer Science, M/C 249,\\
University of Illinois, Chicago, IL 60607, USA}
\email{shvydkoy@uic.edu}

\author{Eitan Tadmor}
\address{Department of Mathematics, Center for Scientific Computation and Mathematical Modeling (CSCAMM), and Institute for Physical Sciences \& Technology (IPST), University of Maryland, College Park\newline
Current address: Institute for Theoretical Studies (ITS), 
ETH-Zurich, Clausiusstrasse 47, CH-8092 Zurich, Switzerland}
\email{tadmor@cscamm.umd.edu}

\date{\today}

\subjclass{92D25, 35Q35, 76N10}

\keywords{flocking, alignment, fractional dissipation, Cucker-Smale, Mortsch-Tadmor, critical thresholds.}

\thanks{\textbf{Acknowledgment.} Research was supported in part by NSF grants DMS16-13911, RNMS11-07444 (KI-Net) and ONR grant N00014-1512094 (ET) and by NSF grant DMS 1515705 (RS). RS thanks CSCAMM for the hospitality for his March 2016 visit which initiated this project. Both authors thank the Institute for Theoretical Studies (ITS) at ETH-Zurich for the hospitality.}

\begin{abstract}
We continue our study of one-dimensional class of Euler equations, introduced in \cite{ST2016}, driven by a forcing with a commutator structure of the form $[\aL_\phi,u](\rho)=\phi*(\rho u)- (\phi*\rho)u$, where $u$ is the velocity field and $\phi$ belongs to a rather general class of \emph{influence} or interaction kernels.

In this paper we quantify the large-time behavior of such systems in terms of \emph{fast flocking} for two prototypical sub-classes of kernels: bounded positive $\phi$'s, and singular $\phi(r) = r^{-(1+\a)}$ of order $\alpha\in [1,2)$ associated with the action of the fractional Laplacian $\aL_\phi=-(-\partial_{xx})^{\alpha/2}$. Specifically, we prove  fast velocity alignment as the velocity $u(\cdot,t)$ approaches  a constant state, $u \to \bar{u}$, with exponentially decaying slope and curvature bounds $|u_x(\cdot,t)|_{\infty}+ |u_{xx}(\cdot,t)|_{\infty}\lesssim e^{-\d t}$. The alignment is accompanied by exponentially fast flocking of the density towards a fixed traveling state $\rho(\cdot,t) - {\rho_{\infty}}(x - \bar{u} t) \rightarrow 0$.
\end{abstract}

\maketitle
\setcounter{tocdepth}{1}
\tableofcontents

\section{Introduction and statement of main results.}

\subsection{Flocking hydrodynamics}
In this paper we continue our study initiated in \cite{ST2016}, of Eulerian dynamics driven by forcing with a commutator structure. In the one-dimensional case, the dynamics of a velocity $u:\Omega\times \R_+ \mapsto \R$ is governed by the system of form   
\begin{equation}\label{eq:FH}
\left\{
\begin{split}
\rho_t +(\rho u)_x&=0,\\
u_t+uu_x &=[\aL_\phi,u](\rho),
\end{split}\right. 
\end{equation}
where  the commutator on the right,
$[\aL_\phi,u](\rho):=\aL_\phi(\rho u)-\aL_\phi(\rho)u$, involves a convolution kernel
\begin{equation}\label{eq:aL}
\aL_\phi(f):=\int_{\R} \phi(|x-y|) (f(y)-f(x))dy.
\end{equation}
The motivation for \eqref{eq:FH} comes from the hydrodynamic description of a large-crowd dynamics driven by Cucker-Smale agent-based model
\begin{equation}\label{eq:CS}
\left\{
\begin{split}
\dot{x}_i&=v_i,\\
\dot{v}_i& =\frac{1}{N}\sum_{j=1}^N \phi(|x_i-x_j|)(v_j-v_i), 
 \end{split}\right.
\qquad (x_i,v_i)\in \Omega\times \R, \quad i=1,2, \ldots, N.
\end{equation}
Here, $\phi$ is a positive, bounded influence function which models the binary interactions among agents in $\Omega$. We focus our attention on the periodic or open line setup, $\Omega=\T, \R$.  For large crowds, $N\gg 1$, the dynamics can be encoded in terms of the empirical distribution $f_N= \frac{1}{N}\sum_{i=1}^N \delta_{x_i}(x)\otimes \delta_{v_i}(v)$, so that its limiting moments lead to a density, $\displaystyle \rho(x,t)=\lim_{N\rightarrow \infty} \int_{\R} f_N(x,v,t)dv$, and momentum, $\displaystyle  \rho u(x,t)=\lim_{N\rightarrow \infty} \int_{\R} vf_N(x,v,t)dv$, governed by \eqref{eq:FH}, \cite{HT2008,CCP2017}.

\begin{equation}\label{e:phi}
\left\{
\begin{split}
\rho_t + (\rho u)_x&=0,\\
u_t+u u_x &=\int_{\R}\phi(|x-y|)(u(y,t)-u(x,t))\rho(y,t) \dy
\end{split}\right. \qquad (x,t): \Omega \times [0, \infty).
\end{equation}

The other important limit of such systems --- their large time behavior for $t\gg 1$, is described by the \emph{flocking} phenomenon.  To this end, let us introduce the set of flocking state solutions, consisting of constant velocities, $\bar{u}$, and  traveling density waves 
$\bar{\rho}=\rho_\infty(x-t\bar{u})$,
\begin{equation}\label{ }
\cF = \{ ( \bar{u}, \bar{\rho}):  \bar{u} \equiv \mbox{constant}, \bar{\rho}(x,t) = \rho_\infty(x-t \bar{u}) \}. 
\end{equation}
We say that a solution $(u(\cdot,t),\rho(\cdot,t))$ converges to a flocking state  $(\bar{u},\bar{\rho}) \in \cF$ in space $X\times Y$ if 
\[
\| u(\cdot,t)- \bar{u}\|_X + \|\rho(\cdot,t) - \bar{\rho}(\cdot,t)\|_Y  \to 0, \text{ as }  t\to \infty.
\]
 This represents the process of \emph{alignment} where the diameter of velocities  tends to zero 
 \begin{subequations}\label{eqs:flocking}
 \begin{equation}\label{eq:Vtoz}
 V(t):=\max_{x,y \in \supr}|u(x,t)-u(y,t)| \to  0, \text{ as }  t\to \infty.
 \end{equation}
 In particular, there is a \emph{fast alignment} if the flocking convergence rate is exponential. In the present case of symmetric interactions, the conservation of averaged mass and momentum, 
\[
\cM(t):=\frac{1}{2\pi}\int_{\T} \rho(x,t)dx \equiv \cM_0, \qquad 
 \cP(t):=\frac{1}{2\pi} \int_{\T} (\rho u)(x,t)dx  \equiv \cP_0
 \]
  implies that a limiting flocking velocity, provided it exists, is given by $\bar{u}=\cP_0/\cM_0$.
\begin{remark} In the case when the dynamics of \eqref{e:phi} takes place over the line  $\Omega=\R$ as in \cite{HT2008,TT2014},  the flocking phenomenon assumes a compactly supported initial configuration with finite initial velocity variation,  
\[
  D_0 :=\max_{x,y \in \text{supp}(\rho_0)}|x-y| <\infty, \qquad 
 V_0 :=\max_{x,y \in \text{supp}(\rho_0)}|u_0(x)-u_0(y)| <\infty.
\]
It requires that, in addition to \eqref{eq:Vtoz}, the flow remains compactly supported 
\begin{equation}\label{e:flockR}
D(t)\leq D_\infty <\infty, \qquad D(t):=\max_{x,y \in \supr}|x-y|
\end{equation}
 This reflects the corresponding flocking behavior in the   agent-based Cucker-Smale model, $\displaystyle \max_{1\leq i,j\leq N} |x_i(t)-x_j(t)|\leq D_\infty$ and $\displaystyle \max_{1\leq i,j\leq N} |v_i(t)-v_j(t)| \to 0$ as $t \to \infty$, \cite{HT2008,MT2014}.
\end{remark}  
\end{subequations}

\subsection{Smooth solutions must flock}
The flocking hydrodynamics of \eqref{e:phi} for bounded positive $\phi$'s follows, as long as they admit global smooth solutions. Indeed, the statement that ``smooth solutions must flock'' holds in the general setup of positive kernels whether symmetric or not  \cite[Lemma 3.1]{ISV2016}, \cite[Theorem 2.1]{TT2014}. For the sake of completeness we include below the proof of flocking along the lines of \cite[theorem 2.3]{MT2014}  which is stated in the following lemma for the periodic case $\Omega = \T$.

\begin{lemma}[Smooth solutions must flock]\label{l:V}
Let $(\rho,u)$ be a smooth solution of the one-dimensional system
\begin{equation}\label{}
\left\{
\begin{split}
\rho_t + (\rho u)_x&=0,\\
u_t+u u_x &=\int_{\R}k(x,y,t)(u(y,t)-u(x,t))\rho(y,t) \dy
\end{split}\right. \qquad (x,t): \T \times [0, \infty),
\end{equation}
with strictly positive kernel, $\displaystyle  \iota_k(t) = \inf_{x,y\in \T} k(x,y,t) >0$.
Then there is a flocking alignment
\[
V(t) \leq V(0) exp\left\{- \cM \int_{\tau=0}^t \!\!\!\iota_k(\tau)d\tau\right\}, \qquad V(t)=\max_{x,y\in\T}|u(x,t)-u(y,t)|.
\]
In  particular, the case of symmetric interaction \eqref{e:phi} admits fast alignment,
\begin{equation}\label{eq:fa}
V(t) \leq V(0) e^{- \cM \iota_\phi t}, \qquad \iota_\phi:=\min_{x\in \T}\phi(|x|).
\end{equation}
\end{lemma}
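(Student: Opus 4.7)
The plan is to bound the velocity spread $V(t) = u_+(t) - u_-(t)$, where $u_+(t) := \max_{x\in \T} u(x,t)$ and $u_-(t) := \min_{x\in \T} u(x,t)$, by a Gronwall-type differential inequality driven by $\iota_k(t)$. Since $(\rho,u)$ is smooth, both $u_\pm$ are Lipschitz in $t$ and therefore differentiable a.e., and at such a $t$ I pick any extremizer $x_\pm(t)$ realizing $u_\pm(t)$. At these points $u_x(x_\pm(t),t) = 0$, so the momentum equation collapses to the scalar ODE
\begin{equation*}
\dot u_\pm(t) = \int_{\T} k(x_\pm(t),y,t)\bigl(u(y,t)-u_\pm(t)\bigr)\rho(y,t)\,\dy,
\end{equation*}
which is the standard maximum-principle reduction for such alignment models.

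Next I exploit the sign structure: the integrand with $x_+$ is nonpositive since $u(y,t) - u_+(t) \leq 0$, while the one with $x_-$ is nonnegative. Combined with the uniform lower bound $k(\cdot,\cdot,t) \geq \iota_k(t) > 0$, this yields the pointwise inequalities
\begin{equation*}
\dot u_+(t) \leq \iota_k(t)\int_{\T}\bigl(u(y,t) - u_+(t)\bigr)\rho(y,t)\,\dy, \qquad
\dot u_-(t) \geq \iota_k(t)\int_{\T}\bigl(u(y,t) - u_-(t)\bigr)\rho(y,t)\,\dy.
\end{equation*}
Subtracting and using $u_-(t) - u_+(t) = -V(t)$ gives $\tfrac{d}{dt} V(t) \leq -\iota_k(t)\, V(t) \int_{\T}\rho(y,t)\,\dy$. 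Conservation of mass pulls the mass integral out as a time-independent constant (equal to $2\pi \cM$ in the averaged normalization of the paper, or simply $\cM$ if $\cM$ denotes total mass in the statement), and Gronwall on this scalar inequality delivers the claimed exponential bound on $V(t)$.

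The symmetric case \eqref{eq:fa} is then immediate: for \eqref{e:phi} we have $k(x,y,t) = \phi(|x-y|)$, so $\iota_k(t) \geq \min_{x\in\T}\phi(|x|) = \iota_\phi$ uniformly in $t$, and the general bound specializes to $V(t) \leq V(0) e^{-\cM \iota_\phi t}$. The only technical point, not really an obstacle, is the lack of classical differentiability of $u_\pm(t)$ at points where the extremum is attained at several locations; this is standard and is resolved either by the a.e.\ differentiability of Lipschitz functions with an arbitrary measurable selection of the extremizer, or by an envelope/sub-differential argument that produces the same one-sided inequality at every $t$. Everything else is routine, and the structure of the argument does not depend on whether $\phi$ is symmetric---only on the uniform positivity of the kernel.
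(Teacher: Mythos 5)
Your argument follows the same route as the paper's proof: evaluate the velocity equation at the extremizers $x_\pm(t)$ where $u_x$ vanishes, use the sign of $u(y,t)-u_\pm(t)$ together with $k\geq\iota_k$ to get one-sided bounds on $\dot u_\pm$, subtract, and apply Gronwall after pulling out the conserved mass. You also correctly flag the a.e.\ differentiability issue (and the $2\pi$ normalization in $\cM$), which the paper glosses over, but the substance of the argument is identical.
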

  
\begin{proof}
Let $\xmin(t)$ be a point where $\displaystyle \umin = u(\xmin(t),t) = \min u$, and  $\xmax(t)$ be a point where $\displaystyle \umax = u(\xmax(t),t) = \max u$.   
Then the maximal value does not exceed,
\[
\ddt \umax  = \int_\T k(\xmax,y,t)(u (y) - \umax) \rho(y,t) \dy  \le \iota_k \int_\T (u (y) -\umax) \rho(y,t) \dy.
\]
Similarly, we have the lower bound
\[ 
\ddt \umin \ge  \iota_k  \int_\T (u (y) -\umin) \rho(y,t) \dy.
 \] 
Subtracting the latter implies that the velocity diameter $\displaystyle V(t)=\max_{x,y\in\T}|u(x,t)-u(y,t)|$ satisfies
\[
\ddt V(t) \le -   \iota_k \cM V(t), \qquad V(t)=\umax(t)-\umin(t)
\]
 and the result readily follows.
\end{proof}

We demonstrate the generality of lemma \ref{l:V} with the following two examples.   
\begin{example}[an example on non-symmetric kernel] The Mostch-Tadmor model \cite{MT2011} uses an \emph{adaptive} normalization, where the pre-factor $1/N$ on the right of \eqref{eq:CS} is replaced  
by $1/\sum_j \phi(|x_i-x_j|)$, leading to the flocking hydrodynamics with \emph{non-symmetric} kernel $k(x,y,t)=\phi(|x-y|)/(\phi*\rho)(x,t)$. The lower-bound
\[
k(x,y,t)=\frac{\phi(|x-y|)}{(\phi*\rho)(x,t)} \geq \frac{\iota_\phi}{I_\phi \cM}, \qquad I_\phi=\max_{x\in \T} \phi(|x|).
\]
shows that flocking holds for positive, bounded $\phi$'s, with exponential rate dictated by the condition number of $\phi$ but otherwise independent of the total mass,
$\displaystyle V(t) \leq V(0)e^{-(\iota_\phi/I_\phi) t}$.
\end{example}

\begin{example}[an example of unbounded kernels]\label{rem:uk} The fractional Laplacian $\aL_\a:=\aL_{\phi_\a}$ is associated with the  singular periodized kernels 
\begin{equation}\label{e:phia}
\phi_\a(x) = \sum_{k \in \Z}  \frac{1}{|x+ 2\pi k|^{1+\a}}, \text{ for } 0<\a<2.
\end{equation}
 Since the argument of lemma \ref{l:V} does not use local integrability, it applies in the present setting with $\iota_\a=\inf_x \phi_\a(|x|)>0$, leading to fast alignment \eqref{eq:fa}.
\end{example}

We close this subsection by noting that the the extension of lemma \ref{l:V} to the case of open space $\Omega = \R$ was proved in \cite{TT2014}. To this end one restricts attention to the  dynamics over $\{\supr\}$: the growth of the velocity diameter $\displaystyle V(t):=\mathop{\max}_{x,y\in\supr}\!\!|u(x,t)-u(y,t)|$,
\[
\ddt V(t) \le -   \iota_k \cM V(t), \qquad V(t)=\max_{x\in\supr}\!\!\!\!\!u(x,t)-\min_{y\in \supr}\!\!\!\!\!u(y,t),
\]
is coupled with the obvious bound on the growth of the  density support, $\ddt D(t) \leq V(t) D(t)$. Assume that $\phi$ is decreasing so that $\iota_\phi \geq \phi(D(t))$. It implies a decreasing free energy ${\mathcal E}(t):=V(t)+\int_{\tau=0}^{D(t)} \phi(\tau)\mbox{d}\tau \leq {\mathcal E}_0$, and  fast alignment follows with a finite diameter, $D(t) \leq D_\infty$, dictated by
\[
D(t) \leq D_\infty, \qquad \cM\int_{D_0}^{D_\infty} \phi(s)\mbox{d}s=V_0.
\] 
 Thus, in the case of open space, $\Omega=\R$, compactness of $\{\supr\}$ requires  a finite velocity variation $\displaystyle V_0 <\int_{D_0}^{\infty} \phi(s)\mbox{d}s$. In particular, an \emph{unconditional} flocking follows for global $\phi$'s with unbounded integrable tails. Of course,  in the periodic settings, compactness of the support of $\rho$ is automatic. 

\subsection{Statement of main results}
Lemma \ref{l:V} tells us that  for positive $\phi$'s, the question of flocking is reduced to
the question of global regularity. The latter question --- the global regularity of  \eqref{e:phi}, was addressed in our previous study \cite{ST2016} in the larger context of three  classes of interaction kernels. Namely, for bounded $\phi$'s, global regularity follows for sub-critical initial data such that $u'_0(x)+\phi*\rho_0(x) >0$. For singular kernels $\phi_\alpha(x):=|x|^{-(1+\alpha)}$ corresponding to $\aL_\phi=-(-\Delta)^{\alpha/2}$, global regularity follows for $\alpha \in [1,2)$. Finally, global regularity also holds in the limiting case $\alpha=2$ which corresponds to the Navier-Stokes equations with $\aL_\phi=\Delta$, and we recall that the global regularity for the cases $\alpha\in [1,2]$  is independent of a critical threshold requirement. A main feature of the forcing in all three cases is their commutator structure in \eqref{eq:FH} which yields is the conservative transport of the first-order quantity $u_x+\aL_{\phi}(\rho)$, \cite{CCTT2016,ST2016}
\begin{equation}\label{eq:maine}
e_t+(ue)_x=0, \qquad e:=u_x+\aL_{\phi}(\rho).
\end{equation}

 In this paper, we complement our earlier study of global regularity with the flocking behavior for these classes of interaction kernels. In particular, we make a more precise flocking statement, where fast alignment $\displaystyle \max_{x,y \in \supr}|u(x,t)-u(y,t)| \lesssim e^{-\delta t}$ is strengthened to an exponential decay of slope and curvature of the velocity $|u_x|_\infty +|u_{xx}|_\infty\lesssim e^{-\delta t}$, and the flocking itself is proved in the strong sense of exponential convergence to one of the flocking states $\cF$.
We treat here the flocking behavior in the two cases of positive  $\phi$'s, and of fractional 
$\phi_\alpha,  \ \alpha \in [1,2)$. The  limiting case of Navier-Stokes equations $\aL_\phi=\Delta$ does not seem to satisfy fast alignment due to lack of non-local interactions.  Its large-time behavior remains open.
 
We begin with the case of a bounded positive kernel, and general density \emph{with} a possibility of vacuum. The result is proved in  both periodic and open line domains.
\begin{theorem}[Bounded positive kernels]\label{t:flock-bounded}
Consider the system \eqref{e:phi} with bounded positive kernel $\phi \in W^{2,\infty}(\Omega)$, where $\Omega = \T$ or $\R$. For any initial conditions $(u_0,\rho_0) \in W^{2,\infty}  \times (W^{1,\infty} \cap L^1) $ which satisfies the sub-criticality condition, 
\begin{equation}\label{ }
u'_0+\phi*\rho_0 >0,
\end{equation}
 there exists a unique global solution  $(\rho,u)\in L^\infty([0,\infty); W^{2,\infty}  \times (W^{1,\infty}\cap L^1))$. Moreover, for fixed $\beta <1$ there exists $C,\d >0$ \emph{(}depending on $\beta$\emph{)} such that the velocity  satisfies the fast alignment estimate
\begin{equation}\label{ }
 |u_x(t)|_\infty+ |u_{xx}(t)|_\infty \leq Ce^{-\d t},
\end{equation}
and there is an exponential convergence towards the flocking state $(\bar{u}, \bar{\rho}) \in \cF$, where $\bar{u}=\cP_0/\cM_0$ and  $\bar{\rho}=\rho_\infty(x-t\bar{u}) \in W^{1,\infty}$, 
\begin{equation}\label{ }
 |u(t)-\bar{u}|_{W^{2,\infty}} +|\rho(t) - \bar{\rho}(t) |_{C^\beta} \leq C e^{-\d t}, \qquad t>0.
\end{equation}
\end{theorem}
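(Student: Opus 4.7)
The plan is to extract every estimate from the transported quantity $e = u_x + \phi*\rho$, in four stages. In the first stage, global existence in $W^{2,\infty}\times (W^{1,\infty}\cap L^1)$ is provided by \cite{ST2016} under the subcriticality hypothesis; conservation of averaged mass and momentum fixes the only candidate limit $\bar u = \cP_0/\cM_0$. Applying \lem{l:V} to the symmetric bounded kernel $\phi$, I get $V(t) \leq V_0 e^{-\delta_0 t}$ with $\delta_0 := \iota_\phi\cM$, hence $|w(\cdot,t)|_\infty \leq V_0 e^{-\delta_0 t}$ for $w := u - \bar u$.

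In the second stage I prove exponential decay of $|u_x|_\infty$ and $|u_{xx}|_\infty$. The commutator structure produces, for bounded $\phi$, the conservation law $e_t + (ue)_x = 0$ for $e = u_x + \phi*\rho$; subcriticality is propagated and $e/\rho$ is constant along characteristics $\dot\eta_t = u(\eta_t,t)$ where $\rho > 0$. Differentiating the momentum equation and rewriting the source in terms of $w$, along characteristics I obtain
\[
\frac{D u_x}{Dt} = -e\, u_x + \phi'*(\rho w) - (\phi'*\rho)w,
\]
whose source is bounded by $C|w|_\infty = O(e^{-\delta_0 t})$. A bootstrap yields a uniform lower bound $e \geq e_* > 0$: as long as $\int_0^t |u_x(s)|_\infty\,\ds$ remains controlled, the identity $e(\eta_t,t) = e_0(x)\exp(-\int_0^t u_x(\eta_s,s)\,\ds)$ keeps $e$ away from zero, and Gr\"onwall applied to the displayed equation returns $|u_x(\cdot,t)|_\infty \leq Ce^{-\delta t}$ for some $\delta \in (0,\delta_0]$, closing the bootstrap. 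Differentiating once more yields
\[
\frac{D u_{xx}}{Dt} = -2e\, u_{xx} + \mathcal R(t),
\]
where $\mathcal R$ collects second-order convolutions involving $\phi'$ and $\phi''$ together with quadratic terms in $u_x$; each is by now exponentially small, so a second Gr\"onwall gives $|u_{xx}(\cdot,t)|_\infty \leq Ce^{-\delta t}$.

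In the third stage I establish flocking of the density by passing to the moving frame $y = x - \bar u t$, setting $\tilde u(y,t) := u(y+\bar u t,t) - \bar u$ and $\tilde\rho(y,t) := \rho(y+\bar u t,t)$. The moving-frame flow $\tilde\eta_t$ satisfies $\dot{\tilde\eta}_t = \tilde u(\tilde\eta_t,t)$ with $|\tilde u|_\infty \leq V(t)$, so $t \mapsto \tilde\eta_t$ is uniformly Cauchy and converges to a limit map $\tilde\eta_\infty$ with rate $|\tilde\eta_t - \tilde\eta_\infty|_\infty \leq Ce^{-\delta_0 t}$. The continuity equation gives $\tilde\rho(\cdot,t) = (\tilde\eta_t)_\#\rho_0$ as push-forward measures, so setting $\rho_\infty := (\tilde\eta_\infty)_\#\rho_0$ and testing against $\psi \in C^1$ produces
\[
\left|\int \psi\,(\tilde\rho(t) - \rho_\infty)\dy\right| = \left|\int (\psi\circ\tilde\eta_t - \psi\circ\tilde\eta_\infty)\rho_0\dx\right| \leq Ce^{-\delta_0 t}|\psi|_{C^1},
\]
that is, exponential convergence in the dual of $C^1$. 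Interpolating against a uniform $W^{1,\infty}$ bound on $\tilde\rho - \rho_\infty$ (inherited from Stage 2 via the transport equation for $\tilde\rho_y$, whose coefficients now decay exponentially) delivers, for every $\beta < 1$, the rate $|\tilde\rho(t) - \rho_\infty|_{C^\beta} \leq Ce^{-\delta(\beta)t}$; restoring the shift $y = x - \bar u t$ completes the proof.

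The hardest point will be closing the bootstrap of Stage 2 when $\rho_0$ admits vacuum, since $e/\rho$ is undefined there. The remedy is that on vacuum sets $\phi*\rho \geq \iota_\phi\cM$ remains uniformly positive, so the characteristic-wise ODE $\dot e = -(e - \phi*\rho)e$ drives $e$ away from zero whenever it becomes small, and a sign analysis substitutes for the ratio argument. A subsidiary issue is verifying the uniform Lipschitz control on $\tilde\rho - \rho_\infty$ used in Stage 3, which is done by differentiating the continuity equation and transporting $\tilde\rho_y$ along the flow using the decaying derivatives established in Stage 2.
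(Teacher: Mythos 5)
Your proposal is correct and uses the same key ingredients as the paper (the conserved quantity $e=u_x+\phi*\rho$, exponential decay of $u_x$ and $u_{xx}$ from characteristic-wise Gr\"onwall with an exponentially small commutator source, and a moving-frame argument for the density), but two of the steps are organized differently and, in each case, the paper's route is somewhat simpler. For the uniform lower bound on $e$, you propose a bootstrap through the identity $e(\eta_t,t)=e_0(x)\exp(-\int_0^t u_x\,\mathrm{d}s)$; the paper instead observes that $\frac{De}{Dt}=(h-e)e$ with $h:=\phi*\rho$ pinned in $[\cM\iota_\phi,\cM I_\phi]$ is a logistic ODE, which immediately yields $0<c_0\leq e(t)\leq C_0$ for $t\geq t_0$ with no self-referential bootstrap — and this is, in fact, exactly the ``sign analysis'' you sketch at the end as a remedy for vacuum, so you have rediscovered the cleaner argument in your closing paragraph; it should simply be the primary one, making $e/\rho$ entirely unnecessary here. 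For the density flocking, you use a Lagrangian push-forward argument $\widetilde\rho(\cdot,t)=(\widetilde\eta_t)_\#\rho_0$, which produces convergence only in the dual of $C^1$ and therefore needs an extra interpolation step (roughly $|f|_\infty\lesssim |f|_{(C^1)^*}^{1/3}|f|_{W^{1,\infty}}^{2/3}$ via mollification) before the final interpolation to $C^\beta$; the paper works Eulerianly, noting that $\widetilde\rho$ solves $\widetilde\rho_t+(u-\bar u)\widetilde\rho_x+u_x\widetilde\rho=0$ so that $|\widetilde\rho_t|_\infty\leq Ce^{-\delta t}$, whence $\widetilde\rho(t)$ is directly Cauchy in $L^\infty$ and the $C^\beta$ rate follows from a single interpolation against the uniform Lipschitz bound (obtained, as you note, from $\frac{D\rho_x}{Dt}=-2u_x\rho_x-u_{xx}\rho$). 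Both routes close; yours would benefit from spelling out the $(C^1)^*$-to-$L^\infty$ interpolation, and from replacing the bootstrap by the logistic argument you already identified.
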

Next we turn to the case of singular kernels, $\phi_\alpha(x) = |x|^{-(1+\a)}, \ 1\leq \a <2$, in the periodic setting $\Omega=\T$, and no-vacuum condition $\rho_0 >0$. The latter two are necessary to maintain uniform parabolicity of the system.

\begin{theorem}[Singular kernels of fractional order $\alpha\in [1,2)$]\label{t:flock-singular}
Consider the system \eqref{e:phi} with singular kernel $\phi_\alpha(x) =|x|^{-(1+\a)}$, $1\leq \a <2$ on the periodic torus $\T$. For any initial condition $(u_0,\rho_0) \in H^3  \times H^{2+\a}$ away from the vacuum there exists a unique global solution  $(\rho,u)\in L^\infty([0,\infty); H^3  \times H^{2+\a})$. Moreover, for fixed $s<3$ there exists $C,\d >0$ \emph{(}depending on $s$\emph{)} such that the velocity  satisfies the fast alignment estimate,
\begin{equation}\label{e:uxxxT}
 |u_x(t)|_\infty + |u_{xx}(t)|_\infty+|u_{xxx}(t)|_2 \leq Ce^{-\d t},
\end{equation}
and there is an exponential convergence towards the flocking state $(\bar{u}, \bar{\rho}) \in \cF$,   where $\bar{u}=\cP_0/\cM_0$ and  $\bar{\rho}=\rho_\infty(x-t\bar{u})\in H^3$,  
\begin{equation}\label{e:urxxx}
 |u(t)-\bar{u}|_{H^3} +|\rho(t) - \bar{\rho}(t) |_{H^s} \leq C e^{-\d t}, \qquad t>0.
\end{equation}
\end{theorem}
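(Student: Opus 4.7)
The plan is to combine the global well-posedness established in \cite{ST2016} with a maximum-principle bootstrap starting from the alignment estimate of Lemma \ref{l:V}, and then pass to a moving frame to obtain the density convergence. Global existence and uniqueness in $L^\infty_t(H^3 \times H^{2+\a})$ together with a uniform lower bound $\rho(\cdot,t) \geq \rho_- > 0$ are provided by \cite{ST2016} for $\a \in [1,2)$. Since $\phi_\a$ is bounded below by $\iota_\a > 0$ on the torus (Example \ref{rem:uk}), Lemma \ref{l:V} gives $V(t) \leq V_0 e^{-\delta_0 t}$ with $\delta_0 = \cM\iota_\a$; combined with the inclusion $\bar u \in [\min u, \max u]$, this yields $|u-\bar u|_\infty \lesssim e^{-\delta_0 t}$.

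The heart of the proof is the slope/curvature bootstrap. Differentiating the momentum equation and using the identity $\p_x[\aL_{\phi_\a},u](\rho) = [\aL_{\phi_\a},u](\rho_x) + [\aL_{\phi_\a},u_x](\rho)$ gives
\[
(\p_t + u\p_x) u_x + u_x^2 = [\aL_{\phi_\a},u](\rho_x) + [\aL_{\phi_\a},u_x](\rho).
\]
At a maximum point $x^*$ of $u_x$, the second commutator provides the coercive bound $[\aL_{\phi_\a},u_x](\rho)(x^*) \leq \iota_\a \int (u_x - \max u_x)\rho\,dy$, which after rewriting $\int u_x \rho = -\int (u-\bar u)\rho_x$ becomes $\leq -2\pi\cM\iota_\a \max u_x + C|u-\bar u|_\infty |\rho_x|_1$, while the first commutator is bounded by the same exponentially decaying quantity. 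Gronwall's inequality then gives $|u_x|_\infty \leq Ce^{-\delta t}$. The analogous computation at the level of $u_{xx}$, with forcing now of size $V(t) + |u_x|_\infty^2$ plus a commutator with $\rho_{xx}$ controlled by the uniform $H^{2+\a}$ bound, produces $|u_{xx}|_\infty \leq Ce^{-\delta t}$; the $|u_{xxx}|_2$-bound follows from a standard $H^3$ energy estimate in which $[\aL_{\phi_\a},u_{xxx}](\rho)$ contributes a fractional-order dissipation on the order of $\rho_- |\Lambda^{\a/2} u_{xxx}|_2^2$ modulo terms that decay at the same exponential rate.

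For the density convergence, pass to the moving frame $\xi = x - \bar u t$ and set $\tilde\rho(\xi,t) = \rho(\xi+\bar u t,t)$, $\tilde u(\xi,t) = u(\xi+\bar u t,t) - \bar u$. Then $\p_t\tilde\rho + (\tilde u\tilde\rho)_\xi = 0$ with $|\tilde u|_{W^{2,\infty}} \lesssim e^{-\delta t}$ and $\tilde\rho$ uniformly bounded in $H^{2+\a}$, so
\[
|\tilde\rho(\cdot,t_2) - \tilde\rho(\cdot,t_1)|_{H^1} \leq \int_{t_1}^{t_2} |(\tilde u\tilde\rho)_\xi|_{H^1}\,d\tau \lesssim \int_{t_1}^{t_2} e^{-\delta\tau}\,d\tau,
\]
exhibiting $\tilde\rho$ as a Cauchy sequence in $H^1$ that converges exponentially to some $\rho_\infty \in H^{2+\a} \subset H^3$. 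Interpolating this $H^1$ convergence with the uniform $H^{2+\a}$ bound upgrades the convergence to $H^s$ for every $s < 2+\a$, which in particular covers every $s < 3$ since $\a \geq 1$.

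The step I expect to be the main obstacle is the pointwise and $L^2$ control of the commutator forcing $[\aL_{\phi_\a},u](\rho_x)$ and its higher analogues in Step 2: because $\phi_\a$ is singular of order $1+\a \geq 2$, the integrand is not absolutely integrable and one must rewrite the commutator so that the singularity is absorbed by a Taylor expansion of $u$ at $x^*$ (exploiting the bootstrapped $u_{xx} \in L^\infty$), while retaining the factor $|u-\bar u|_\infty$ that drives the exponential decay. The $H^3$ energy estimate presents the analogous difficulty and will require a Kato-Ponce-type fractional commutator estimate adapted to $\aL_{\phi_\a}$, performed inductively with respect to the order of differentiation.
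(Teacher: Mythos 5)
Your proposal circles around the right difficulties but is missing the key tools that make the paper's argument go through, and it also contains a circularity at the outset.

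\textbf{Circularity in the input bounds.} You treat the uniform-in-time bound $(\rho,u) \in L^\infty_t(H^3 \times H^{2+\a})$ and the lower bound $\rho \geq \rho_->0$ as supplied by \cite{ST2016}. In fact \cite{ST2016} gives global existence but not uniform-in-time Sobolev bounds; establishing those is one of the main outputs of Theorem~\ref{t:flock-singular}. In the paper the density bounds require a dedicated maximum-principle argument on the transported quantity $q = e/\rho$ (Lemma~\ref{l:r-bounds}), and the uniform $H^3$ control of $\rho$ is only obtained at the very end, coupled to the $e''$-estimate. Your later interpolation from ``uniform $H^{2+\a}$'' is therefore assuming part of the conclusion.

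\textbf{Missing dissipation enhancement.} Your $u_x$-step decomposes the differentiated forcing into $[\aL_{\phi_\a},u](\rho_x) + [\aL_{\phi_\a},u_x](\rho)$ and extracts only the Cucker--Smale-type damping $\iota_\a\int(u_x-\max u_x)\rho$ from the second commutator. That damping is linear in $\max u_x$, and it must compete with the quadratic term $u_x^2$ coming from $(\p_t+u\p_x)u_x$ (at a minimum of $u_x$, $u_x^2$ pushes the wrong way) and with the unresolved first commutator. The paper's decisive ingredient here is Lemma~\ref{l:enhance}: an \emph{amplitude-improved} Constantin--Vicol inequality $\dD u'(x) \geq c_1 |u'(x)|^3/V$ (and the companion form $\dD u'(x) \geq B|u'(x)|^2 - c_2 B^3 V^2$). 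This uses the full singular part of $\phi_\a$ rather than just $\iota_\a$, and it lets the nonlocal dissipation absorb the cubic and quadratic terms at the cost of a free exponentially-decaying remainder. Without this lemma or an equivalent, the bootstrap does not close; the Cucker--Smale part of the kernel is not strong enough.

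\textbf{The first commutator is not merely an obstacle you can defer.} You correctly flag $[\aL_{\phi_\a},u](\rho_x)$ as problematic because for $\a\geq 1$ the integral is not absolutely convergent, but you do not resolve it, and the resolution is nontrivial. In the paper this term is split into a short-range part controlled by $|\rho'|_\infty\,\dD^{1/2}u'(x)$ (then absorbed by the dissipation via Young), and a long-range part controlled by periodicity (the computations \eqref{e:triple1}--\eqref{e:triple2}, generalized in Lemma~\ref{l:large}). Both require a \emph{uniform} bound on $|\rho_x|_\infty$, which itself is a separate lemma (Lemma~\ref{l:rslope}) and depends on the Schwab--Silvestre parabolic H\"older regularity \eqref{e:gamma} and on the Constantin--Vicol nonlinear maximum principle. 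None of this machinery appears in your sketch, yet it is load-bearing.

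\textbf{The $H^3$ closure.} Your $|u_{xxx}|_2$ estimate invokes fractional dissipation ``modulo terms decaying at the same rate,'' but the delicate point the paper addresses is that $|\rho'''|_2$ is \emph{not} known to be bounded at that stage. The paper handles this by coupling the $|u'''|_2$ estimate with an $|e''|_2$ estimate, showing first that $X = |u'''|_2^2 + |\rho'''|_2^2$ grows at an arbitrarily small exponential rate, and then feeding this back to extract genuine decay of $|u'''|_2$ and boundedness of $|\rho'''|_2$. Your argument has no analogue of this two-pass step, and because $E|\rho'''|_2^2$ appears with an a priori unknown growth rate, Gronwall does not close directly.

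In short, the architecture (align $\to$ bootstrap slope/curvature $\to$ moving frame) matches the paper, but the central analytic levers --- the amplitude-enhanced nonlocal dissipation, the uniform $|\rho_x|_\infty$ control via Schwab--Silvestre + Constantin--Vicol, the periodicity tricks for the long-range commutator, and the coupled $u'''/e''$ growth-rate argument --- are all absent, and the assumed uniform $H^{2+\a}$ bound is circular.
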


\begin{remark}(On the singular case of fractional order $\alpha \in (0,1)$) We recently learned that  shortly after our release of \cite{ST2016}  another approach to the regularity of \eqref{e:phi} with singular kernels $\phi_\alpha$  appeared in the work of T. Do et. al. \cite{DKRT2017}. Their alternative approach, based on the propagation of properly-tuned modulus of continuity along the lines of \cite{KNV2008}, covers the regularity of \eqref{e:phi}${}_\alpha$ which is treated as critical  system for the range $\alpha\in (0,1)$. Although our arguments of regularity in \cite{ST2016} can be adapted to the respective range, it is not clear whether the flocking statement  in  theorem \thm{t:flock-singular} survives for $0<\a<1$ with either one of these approaches. 
\end{remark}

\section{Flocking with smooth positive kernels}
Our starting point is the conservative transport \eqref{eq:maine}. In the case of positive mollifier we have $[\cL_\phi,u](\rho)= [\phi*,u](\rho)$ and  \eqref{eq:maine} yields
\begin{equation}\label{eq:eeq}
e_t +(ue)_x=0, \qquad e:=u'(x,t)+\phi*\rho(x,t)
\end{equation}
 Hence the positivity  $e_0>0$ propagates in time. It follows that \eqref{e:phi} admits global smooth solutions under the critical threshold condition $e_0(\cdot)>0$, see  \cite{CCTT2016} for details. The flocking of these solutions, which is guaranteed by Lemma~\ref{l:V}, is quantified in the following two lemmas in terms of constants $C,\d >0$, depending on $\cM, |\phi|_{W^{1,\infty}},|u_0|_{W^{1,\infty}}, \min \phi >0$ and $\min e_0 >0$.
\begin{lemma}\label{l:ux-fast} Suppose $e_0 >0$ on $\Omega$. There exist constants $C,\d >0$ such that
\[
|u_x(\cdot,t)|_\infty \leq C e^{-\d t}.
\]
\end{lemma}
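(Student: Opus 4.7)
The plan is to exploit the commutator conservation law \eqref{eq:eeq}, combined with the fast alignment from \lem{l:V}, to produce exponential decay of $u_x$. Writing $e_t + (ue)_x = 0$ in non-conservative form along the characteristics $\dot{X}(t;\alpha) = u(X(t;\alpha), t)$ gives the logistic-type ODE
\[
\frac{De}{Dt} = -u_x e = e\bigl(\phi*\rho - e\bigr).
\]
Since $\phi$ is strictly positive and bounded, one has the uniform two-sided bound $0 < m \leq \phi*\rho \leq M < \infty$ with $m = \iota_\phi \|\rho\|_{L^1}$ and $M = |\phi|_\infty \|\rho\|_{L^1}$ (these norms are conserved by mass continuity). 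Combined with the hypothesis $e_0 > 0$, a standard comparison argument for this scalar ODE traps $e(x,t)$ inside the fixed interval $[e_{\min}, e_{\max}]$ uniformly in $x,t$, where $e_{\min} := \min(\inf e_0, m) > 0$. This strictly positive lower bound will provide the damping in the next step.

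Next I would derive a linear ODE for $u_x = e - \phi*\rho$ along the same characteristics. A direct calculation using $\rho_t = -(\rho u)_x$ and integration by parts in the convolution gives
\[
\frac{D(\phi*\rho)}{Dt} = -\phi_x*(\rho u) + u\,\phi_x*\rho = -[\phi_x*, u](\rho),
\]
where $[\phi_x*, u](\rho)(x) = \int \phi_x(x-y)\rho(y)\bigl(u(y)-u(x)\bigr)\,dy$. Subtracting from the identity for $De/Dt$ above yields
\[
\frac{Du_x}{Dt} + e\, u_x = [\phi_x*, u](\rho).
\]
The damping coefficient on the left is $\geq e_{\min}$, while the commutator forcing on the right is pointwise bounded by $|\phi_x|_\infty\,\|\rho\|_{L^1}\, V(t) \leq K\, e^{-\cM \iota_\phi t}$ by \lem{l:V}.

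The conclusion then follows from Duhamel's formula applied to this scalar ODE along each characteristic:
\[
|u_x(X(t;\alpha),t)| \leq |u_x(0,\alpha)|\, e^{-e_{\min} t} + K\!\int_0^t e^{-e_{\min}(t-s)}\, e^{-\cM \iota_\phi s}\, ds \leq C\, e^{-\delta t},
\]
with $\delta = \min(e_{\min},\, \cM\iota_\phi) > 0$ and $C$ depending on the parameters listed just before the lemma (with a harmless logarithmic correction absorbed into $\delta$ in the resonant case $e_{\min}=\cM\iota_\phi$). Since $\alpha \mapsto X(t;\alpha)$ is a diffeomorphism of $\Omega$ under the smoothness assumptions, this pointwise bound passes uniformly to $|u_x(\cdot,t)|_\infty$. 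The main technical hurdle is securing the uniform positive lower bound $e \geq e_{\min}$: without this damping, the exponentially small commutator forcing alone would not force $u_x$ to decay exponentially. On the torus this lower bound is immediate from $\iota_\phi > 0$; in the open-line case one localizes the argument to the density support $\supr$, which is propagated by the characteristic flow and where the analogous pointwise bound on $\phi*\rho$ still holds.
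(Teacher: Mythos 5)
Your proof is correct and follows essentially the same route as the paper: propagate $e>0$ and obtain uniform two-sided bounds for $e$ from the logistic comparison $\frac{De}{Dt}=(\phi*\rho-e)e$, compute $\frac{D(\phi*\rho)}{Dt}=-[\phi'*,u]\rho$ and bound it by the exponentially decaying velocity variation from \lem{l:V}, then close a damped linear ODE for $u_x=e-\phi*\rho$ along characteristics. The only cosmetic difference is that the paper waits a finite time $t_0$ to trap $e\in[\cM\iota_\phi/2,\,2\cM I_\phi]$ (bounds independent of $e_0$), whereas you use the immediate bound $e\geq\min(\inf e_0,\,\iota_\phi\cM)$; both yield the same conclusion with constants of the same dependence.
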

\begin{proof}  
We rewrite \eqref{eq:eeq} as a logistic equation along characteristics $\dot{x}(t)=u(x(t),t)$ with non-autonomous threshold $h:=\phi*\rho$,
\begin{equation}\label{ }
\DDt e = (h - e) e, \qquad h=\phi*\rho.
\end{equation}
Here and below, $\displaystyle \DDt$ denotes differentiation along generic particle path $\{x(t)=x(t;x_0)\}$ initiated at $x(t=0;x_0)=x_0$.
Hence, in view of the bound $\cM \iota_\phi \leq h(x,t) \leq \cM I_\phi$,
\[
(\cM I_\phi - e) e \geq \DDt e \geq (\cM \iota_\phi - e) e.
\]
Since $e_0$ is uniformly bounded from above and away from zero, it follows that there exists a  time $t_0>0$ such that the quantity $e(t)=e(t;x_0)$ remains likewise bounded from above and below uniformly for all initial conditions $e_0=e(t=0;x_0)$, 
\begin{equation}\label{e:cC}
C_0:=2 \cM I_\phi \geq e(t)\geq \cM \iota_\phi/2=:c_0>0,\qquad t>t_0.
\end{equation}
 Let us now write the equation for $\phi*\rho$ by convolving the mass equation $\phi*\rho_t + \phi*(\rho u)_x=0$:
 \begin{subequations}\label{eqs:above}
\begin{equation}\label{eq:above}
(\phi*\rho)_t + u (\phi*\rho)_x = -[\phi'*, u]\rho,
\end{equation}
where the commutator is given by
\[
[\phi'*, u]\rho = \int_{\T} \phi'(|x-y|)\big(u(x,t)-u(y,t)\big)\rho(y,t)dy.
\]
In view of \lem{l:V}, and the fact that $\phi \in W^{1,\infty}$ we have
\begin{equation}
|[\phi'*, u]\rho | \leq C\cM |\phi|_{W^{1,\infty}} e^{-\d t}.
\end{equation}
\end{subequations}
In what follows we denote by $E=E(t)$ a generic exponentially decaying quantity, so \eqref{eqs:above} reads $\displaystyle \DDt h = E$, 
uniformly over all initial conditions $x_0\in \Omega$. Let us rewrite this equation for the difference $h = e-u_x$:
\[
\DDt u_x = - u_x e + E.
\]
Recall that for large enough time, $t> t_0$ we have the positive boundedness  $0<c_0<e<C_0$ uniformly over initial conditions. This readily implies
\[
|u_x(x(t;x_0),t)| \leq E(t)
\]
uniformly over $x_0$. Since at any time $t$ characteristics cover all $\Omega$ we arrive at the desired bound $|u_x(t)|_\infty \leq E(t)$.
\end{proof}

Solving the density equation along characteristics we obtain
\begin{equation}\label{ }
\rho(x(t;x_0),t) = \rho_0(x_0) \exp\left\{- \int_0^t u_x(x(s;x_0),s) \ds \right\}.
\end{equation}
So, in view of \lem{l:ux-fast} the density enjoys a pointwise global bound (which is not given a priori)
\begin{equation}\label{eq:rhoi}
\sup_{t>0}| \rho(\cdot,t)|_\infty < \infty.
\end{equation}

Next we establish a second round of estimates in higher order regularity in order to get a control over $\rho_x$ and then prove flocking of the density.

\begin{lemma}\label{l:uxx-fast} There exist $C,\d >0$ such that
\[
|u_{xx}(\cdot, t)|_\infty \leq C e^{-\d t}.
\]
\end{lemma}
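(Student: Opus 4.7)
The plan is to differentiate the momentum equation in \eqref{e:phi} twice in $x$ and extract a pointwise transport equation for $u_{xx}$ along particle paths, mirroring the argument used in \lem{l:ux-fast}. Writing the forcing as $F:=\phi*(\rho u)-(\phi*\rho)u$, a direct computation of $F_{xx}$ combined with $(uu_x)_{xx}=3u_xu_{xx}+uu_{xxx}$ gives the pointwise identity
$$\DDt u_{xx}=-A\,u_{xx}+R,\qquad A:=3u_x+\phi*\rho,$$
with residual
$$R:=\bigl[\phi''*(\rho u)-(\phi''*\rho)u\bigr]-2(\phi'*\rho)\,u_x,$$
where derivatives of the kernel are interpreted in the same sense as in the proof of \lem{l:ux-fast}.

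The next step is to show that, for sufficiently large time, the coefficient $A$ is uniformly positive while the residual $R$ decays exponentially. Using \lem{l:ux-fast} together with the lower bound $\phi*\rho\geq \iota_\phi\cM>0$, one obtains $t_0>0$ and $c>0$ with $A(x,t)\geq c$ uniformly in $x$ for all $t\geq t_0$. The second piece of $R$ is dominated by $2|\phi|_{W^{1,\infty}}\cM\cdot|u_x|_\infty$, which is $O(e^{-\d t})$ by \lem{l:ux-fast}. The crucial observation is that the first piece is a \emph{commutator}: rewriting
$$\phi''*(\rho u)(x)-(\phi''*\rho)(x)\,u(x)=\int_{\Omega}\phi''(|x-y|)\,\rho(y)\bigl(u(y,t)-u(x,t)\bigr)\,dy,$$
and invoking $\phi\in W^{2,\infty}$ together with the fast alignment rate $V(t)\leq Ce^{-\d t}$ from \lem{l:V} and conservation of mass, I bound this term by $|\phi|_{W^{2,\infty}}\cM V(t)\leq Ce^{-\d t}$. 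Altogether $|R(x,t)|\leq Ce^{-\d t}$ uniformly in $x$.

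With $A\geq c$ and $|R|\leq Ce^{-\d t}$ available for $t\geq t_0$, an integrating-factor argument along an arbitrary characteristic $x(t;x_0)$ yields
$$|u_{xx}(x(t;x_0),t)|\leq|u_{xx}(\cdot,t_0)|_\infty\,e^{-c(t-t_0)}+C\int_{t_0}^t e^{-c(t-s)}e^{-\d s}\,ds\leq C'e^{-\d' t}$$
with $\d':=\tfrac12\min(c,\d)$, uniformly in $x_0$. Since the short interval $[0,t_0]$ is absorbed into the global $W^{2,\infty}$-bound on $u$ built into the theorem, and characteristics sweep out $\Omega$ at each time, this upgrades to the sought estimate $|u_{xx}(\cdot,t)|_\infty\leq Ce^{-\d t}$.

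The only non-routine step—and the source of the decay—is the recognition that $\partial^2_{xx}F$ decomposes into a diagonal damping $-(\phi*\rho)u_{xx}$ plus two commutator-type expressions whose velocity differences inherit the full exponential alignment rate from \lem{l:V}. Without this structural decomposition one would only recover a uniform-in-time bound on $u_{xx}$, missing the exponential decay entirely; reading the second derivatives of $F$ as bounded $O(1)$ sources is the pitfall to avoid.
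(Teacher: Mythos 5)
Your proposal is correct and is essentially the paper's argument in a different packaging: the paper derives the $u_{xx}$-equation by writing and subtracting the equations for $e_x$ and $h_x=\phi'*\rho$, whereas you differentiate the momentum equation twice directly, but since $u_{xx}=e_x-h_x$ the two computations produce the same damped transport identity $\DDt u_{xx}=-(\,\cdot\,)\,u_{xx}+E$ with a coefficient bounded below by $\phi*\rho-O(|u_x|_\infty)\gtrsim \iota_\phi\cM>0$ and an exponentially decaying residual coming from the $\phi''$-commutator and the $(\phi'*\rho)u_x$ term. Both proofs then conclude by the same integrating-factor/Gronwall step along characteristics, so no new idea is introduced and none is missing.
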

\begin{proof}  
Let us write the equations for $e_x$ and $h_x = \phi' * \rho$:
\[
\begin{split}
\DDt e_x & = - u_x e_x+ (h_x - e_x) e \\
\DDt h_x &=   \int_{\T} \phi''(|x-y|)\big(u(x,t)-u(y,t)\big)\rho(y,t)dy.
\end{split}
\] 
Clearly, the right hand side of $h_x$-equation is exponentially decaying, $=E$. Subtracting the two we obtain the equation for $u_{xx}$:
\begin{equation}\label{ }
\DDt u_{xx} = - u_{xx} e  - u_x e_x  +E.
\end{equation}
Note that clearly, $h_x$ is a bounded function, hence  $u_x e_x  =  E u_{xx} + E$. So,
\begin{equation}\label{ }
\DDt u_{xx} = - u_{xx} e  - E u_{xx}  +E.
\end{equation}
Once again, the positive boundedness of $e$ in \eqref{e:cC}, implies that for large enough time,
\begin{equation}\label{ }
\DDt u_{xx} = - u_{xx} e  +E, \qquad t>t_0,
\end{equation}
and the lemma follows. 
\end{proof}
Let's write the equation for $\rho_x$:
\begin{equation}\label{ }
\DDt \rho_x  =  - 2 u_x \rho_x - u_{xx} \rho = E \rho_x + E
\end{equation}
This shows that $\rho_x$ is uniformly bounded. We are now ready to prove \thm{t:flock-bounded}. We state the last computation in a lemma as it will also be used ad verbatim in the next section.

\begin{lemma}\label{l:flock-bdd} Let $(u,\rho) \in W^{2,\infty} \times W^{1,\infty}$ be any solution pair to \eqref{e:phi}.  For fixed $\beta <1$ there exist $C,\d >0$ \emph{(}depending on $\beta$\emph{)} and a flocking pair $(\bar{u}, \bar{\rho}) \in \cF$, $\bar{\rho} \in W^{1,\infty}$, such that,
\begin{equation}\label{ }
 |\rho(\cdot,t) - \bar{\rho}(\cdot,t) |_{C^{\beta}} \leq C e^{-\d t}, \qquad t>0.
\end{equation}
Thus, $\cF$ contains all limiting states of the system \eqref{e:phi}.
\end{lemma}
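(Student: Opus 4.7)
The plan is to build the flocking state $(\bar u,\bar\rho)$ from the asymptotics of the Lagrangian flow, and then convert the exponential decay of $u_x$ in \lem{l:ux-fast} into exponential convergence of $\rho$ to $\bar\rho$. Conservation of averaged momentum places $\bar u=\cP_0/\cM_0$ between $\min u(\cdot,t)$ and $\max u(\cdot,t)$ on $\supr$, so \lem{l:V} yields $|u(x,t)-\bar u|\le V(t)\le Ce^{-\d t}$ on the support.

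First I would analyze characteristics $\dot X(t;x_0)=u(X(t;x_0),t)$, $X(0;x_0)=x_0$. The shifted characteristic $Y(t;x_0):=X(t;x_0)-\bar u t$ is Cauchy in $t$ and converges to
\[
Y_\infty(x_0):=x_0+\int_0^\infty(u(X(s;x_0),s)-\bar u)\ds,\qquad |Y(t;x_0)-Y_\infty(x_0)|\le Ce^{-\d t},
\]
uniformly in $x_0$. Simultaneously, the Jacobian $J(t;x_0)=\partial_{x_0}X(t;x_0)=\exp\int_0^t u_x(X(s;x_0),s)\ds$ converges by \lem{l:ux-fast} to a strictly positive bounded limit $J_\infty(x_0)$ with $|J(t;x_0)-J_\infty(x_0)|\le Ce^{-\d t}$. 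Since $Y_\infty'(x_0)=J_\infty(x_0)$ is bounded above and away from zero, $Y_\infty$ is a bi-Lipschitz homeomorphism of $\supr$.

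I would then define the limit profile by the push-forward $\rho_\infty(Y_\infty(x_0)):=\rho_0(x_0)/J_\infty(x_0)$ and set $\bar\rho(x,t):=\rho_\infty(x-\bar u t)$. The Lagrangian density identity $\rho(X(t;x_0),t)=\rho_0(x_0)/J(t;x_0)$ and the triangle inequality reduce $|\rho(\cdot,t)-\bar\rho(\cdot,t)|_\infty$ to the already-established exponential convergences of $J(t;x_0)$ and $Y(t;x_0)$, yielding $|\rho(\cdot,t)-\bar\rho(\cdot,t)|_\infty\le Ce^{-\d t}$. The bound $\rho_\infty\in W^{1,\infty}$ follows either from direct differentiation, using the uniform $|\rho_x|_\infty$ bound established just before the lemma together with a bound on $J_\infty'$ obtained by combining \lem{l:uxx-fast} with boundedness of $J$, or by noting that $\rho(\cdot,t)$ stays uniformly bounded in $W^{1,\infty}$ while converging uniformly to $\bar\rho$.

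To upgrade from $L^\infty$ to $C^\beta$ for any $\beta<1$, I would apply the standard interpolation
\[
|f|_{C^\beta}\le C|f|_\infty^{1-\beta}|f|_{W^{1,\infty}}^\beta
\]
to $f=\rho(\cdot,t)-\bar\rho(\cdot,t)$, whose $W^{1,\infty}$ norm is uniformly bounded and whose $L^\infty$ norm decays at rate $e^{-\d t}$. This yields decay at rate $(1-\beta)\d$, which I absorb into a new $\d>0$. The main obstacle is verifying that $Y_\infty$ is a bi-Lipschitz homeomorphism, i.e.\ that the asymptotic Lagrangian flow is non-degenerate; this is exactly where the time-integrability of $|u_x|_\infty$ provided by \lem{l:ux-fast} is crucial, and everything else is a routine bookkeeping of exponential rates along characteristics.
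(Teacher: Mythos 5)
Your proposal is correct but takes a genuinely different, Lagrangian route from the paper's Eulerian one. The paper works entirely in Eulerian coordinates: it introduces the shifted density $\widetilde\rho(x,t)=\rho(x+\bar u t,t)$, writes its transport equation $\widetilde\rho_t+(u-\bar u)\widetilde\rho_x+u_x\widetilde\rho=0$, and notes that the drift coefficient $u-\bar u$ decays by \lem{l:V}, the source coefficient $u_x$ decays by \lem{l:ux-fast}, and $\widetilde\rho_x$, $\widetilde\rho$ are uniformly bounded by the $|\rho_x|_\infty$ estimate preceding the lemma; hence $|\widetilde\rho_t|_\infty\le Ce^{-\d t}$, so $\widetilde\rho(t)$ is Cauchy in $L^\infty$ and the limit $\rho_\infty$ lies in $W^{1,\infty}$ by weak-$*$ compactness. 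You instead build the flow map $X(t;x_0)$, the de-drifted characteristic $Y(t;x_0)$, and the Jacobian $J(t;x_0)=\exp\int_0^t u_x\,\mathrm{d}s$, show each converges exponentially and uniformly to a limit, and then identify $\rho_\infty$ as the push-forward of $\rho_0$ under the bi-Lipschitz limit map $Y_\infty$. Both arguments rest on the same inputs (\lem{l:V}, \lem{l:ux-fast}, \lem{l:uxx-fast}, the $|\rho_x|_\infty$ bound) and close with the same $L^\infty$-to-$C^\beta$ interpolation. Your version is more constructive, producing the explicit formula $\rho_\infty\circ Y_\infty=\rho_0/J_\infty$; the paper's is shorter because the Eulerian computation bypasses the bi-Lipschitz verification entirely.

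One small ordering issue: the triangle inequality you invoke, $|\rho(X(t),t)-\rho_\infty(Y(t))|\le|\rho_0/J(t)-\rho_0/J_\infty|+|\rho_\infty(Y_\infty)-\rho_\infty(Y(t))|$, uses Lipschitz continuity of $\rho_\infty$ in its second term, yet you establish $\rho_\infty\in W^{1,\infty}$ only afterward. This is easily repaired — the identity $\rho_\infty\circ Y_\infty=\rho_0/J_\infty$, together with $J_\infty$ bounded away from zero, boundedness of $J_\infty'$ (which requires \lem{l:uxx-fast}), and bi-Lipschitz invertibility of $Y_\infty$, gives $\rho_\infty\in W^{1,\infty}$ without reference to the triangle inequality — but that regularity should be proved before it is used.
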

\begin{proof}
The velocity alignment goes to its natural limit $\bar{u} = \cP/ \cM$.    Denote $\widetilde{\rho}(x,t) := \rho(x+ t \bar{u},t)$. Then $\widetilde{\rho}$ satisfies
\[
\widetilde{\rho}_t + (u - \bar{u}) \widetilde{\rho}_x + u_x \widetilde{\rho} = 0,
\]
where all the $u$'s are evaluated at $x+ t \bar{u}$. According to the established bounds we have $\displaystyle | \widetilde{\rho}_t |_\infty < C e^{-\d t}$.
This proves that $\widetilde{\rho}(t) $ is Cauchy as $t \ra \infty$, and hence there exists a unique limiting state, $\rho_\infty(x)$, such that 
\[
| \widetilde{\rho} (\cdot,t) - \rho_\infty(\cdot)|_\infty < C_1 e^{-\d t}.
\]
Shifting $x$  this  can be expressed in terms of $\rho$ and  $\bar{\rho}(\cdot,t)=\rho_\infty(x-t\bar{u})$
\[
 | \rho(\cdot,t ) - \bar{\rho}(\cdot,t)|_\infty < C_1 e^{-\d t}.
\]
 We also have  $\bar{\rho} \in W^{1,\infty}$ from weak-star compactness. The statement of the lemma follows by interpolation of this  exponential bound with the $C^1$-bound of $\rho$.
\end{proof}

\section{Flocking with singular kernels}
\subsection{Uniform bounds on density, and velocity alignment} The results of this section lead towards the first step in the proof of \thm{t:flock-singular}. However,  we state them in such generality since they hold for a much broader class of kernels satisfying the following three properties.
\begin{itemize}
\item[(i)] Boundedness (away from the origin): for any $r>0$,
\[
 \iota_\phi(r):=\inf_{|x|<r}\phi(|x|) < \infty;
 \]
\item[(ii)] Positivity: $\iota_\phi(2\pi) = \inf_x \phi (|x|) > 0$;
\item[(iii)] Singularity : $\limsup_{r\to 0} r \iota_\phi(r) = \infty$.
\end{itemize}
This class of kernels which was already identified in \cite[Section 3.2]{ST2016},  includes the singular periodized kernels associated with $-(-\partial_{xx})^{\a/2}$, 
\begin{equation}\label{e:phia}
\phi_\a(x) = \sum_{k \in \Z}  \frac{1}{|x+ 2\pi k|^{1+\a}} \ \ \text{ for } 0<\a<2.
\end{equation}
The case of local Laplacian $L = \partial_{xx}$ is not included. 

We recall that due to the positivity (ii), lemma \lem{l:V} applies --- as noted in Example \ref{rem:uk}, fast flocking holds irrespective of (lack of) local integrability 
\begin{equation}\label{ }
V(t) \leq V(0) e^{- \cM \iota_\phi t}.
\end{equation}

As before, our starting point is the conservative transport \eqref{eq:maine} involving $\aL_\a\equiv \aL_{\phi_\a}$,
\begin{equation}\label{ }
e_t + (ue)_x = 0, \qquad e = u_x + \aL_\a( \rho).
\end{equation}
Paired with the mass equation we find that the ratio $q = {e}/{\rho}$ satisfies the transport equation 
\begin{equation}\label{ }
\DDt q = q_t + u q_x = 0.
\end{equation}
Starting from sufficiently smooth initial condition with $\rho_0$ away from vacuum we can assume that 
\begin{equation}\label{ }
Q = |q(t)|_\infty =  | q_0 |_\infty <\infty.
\end{equation}

\begin{lemma}\label{l:r-bounds} Fix $\alpha \in [1,2)$ and consider the pair $(u,\rho)$ which is a smooth solution to \eqref{eq:FH},\eqref{eq:aL}$\mbox{}_\a$ subject to initial density $\rho_0$ away from vacuum. Then there are positive constants $c = c(\cM,Q,\phi)$, $C = C(\cM,Q,\phi)$ such that 
\begin{equation}\label{ }
0< c \leq \rho(x,t) \leq C, \qquad  x\in\T, \ t \geq 0.
\end{equation}
\end{lemma}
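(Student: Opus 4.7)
The plan is to rewrite $u_x = q\rho - \aL_\a\rho$ using the transported ratio $q = e/\rho$ with $|q(\cdot,t)|_\infty \le Q$, and then control $\rho$ pointwise at its extrema via a nonlinear maximum principle for $\aL_\a$.

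First I would note that along any particle path $\DDt \rho = -u_x\rho = -q\rho^2 + \rho\,\aL_\a(\rho)$. Since $\rho(\cdot,t)$ is smooth on $\T$, an envelope argument gives, a.e.\ in $t$, at any maximizer $x_*(t)$ (resp.\ minimizer $x^*(t)$) of $\rho(\cdot,t)$,
\[
\frac{d}{dt}\rho_{\max} = -q(x_*)\rho_{\max}^2 + \rho_{\max}\aL_\a\rho(x_*),\qquad
\frac{d}{dt}\rho_{\min} = -q(x^*)\rho_{\min}^2 + \rho_{\min}\aL_\a\rho(x^*).
\]
At $x_*$ one has $\aL_\a\rho(x_*)\le 0$ and at $x^*$ one has $\aL_\a\rho(x^*)\ge 0$; the work is in quantifying these signs sharply enough to close the bounds.

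For the upper bound I would establish a Constantin--Vicol type nonlinear dissipation estimate
\[
-\aL_\a\rho(x_*) \ge \frac{c\,\rho_{\max}^{1+\a}}{\cM^{\a}},
\]
valid whenever $\rho_{\max}$ is large relative to $\cM$. Its proof uses Chebyshev's inequality on the super-level set, $|\{y\in\T:\rho(y)>\rho_{\max}/2\}|\le 4\pi\cM/\rho_{\max}$, so that on a ball $B_R(x_*)$ with $R\sim\cM/\rho_{\max}$ at least half the measure lies in $\{\rho\le\rho_{\max}/2\}$; on this part the integrand of $-\aL_\a\rho(x_*)$ exceeds $R^{-(1+\a)}\cdot(\rho_{\max}/2)$. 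Combined with $|q|\le Q$,
\[
\frac{d}{dt}\rho_{\max} \le Q\rho_{\max}^2 - \frac{c\,\rho_{\max}^{2+\a}}{\cM^{\a}},
\]
so $\rho_{\max}$ decreases once it exceeds the equilibrium threshold $\sim\cM(Q/c)^{1/\a}$, giving a uniform upper bound $C = C(\cM,Q,\phi)$.

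For the lower bound, the coarser estimate coming from the global positivity $\phi_\a\ge\iota_\phi$ already suffices: at $x^*$,
\[
\aL_\a\rho(x^*) \ge \iota_\phi\!\int_\T(\rho(y) - \rho_{\min})\dy = 2\pi\iota_\phi(\cM - \rho_{\min}),
\]
which is strictly positive once $\rho_{\min}\le\cM/2$. Substituting yields $\tfrac{d}{dt}\rho_{\min}\ge \rho_{\min}(\pi\iota_\phi\cM - Q\rho_{\min})$, so $\rho_{\min}$ strictly increases whenever it dips below a positive threshold $c = c(\cM,Q,\phi)$. The main obstacle is the upper estimate: a coarse linear bound of the form $-\aL_\a\rho(x_*)\gtrsim \rho_{\max}$ cannot beat $Q\rho_{\max}^2$ unconditionally, and it is precisely the singularity hypothesis (iii) --- namely $\phi_\a(R)\sim R^{-(1+\a)}\to\infty$ as $R\to 0$ --- that is exploited through the Chebyshev-based cutoff at $R\sim\cM/\rho_{\max}$ to manufacture the needed superlinear dissipation.
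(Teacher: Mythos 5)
Your proof is correct, but for the upper bound you take a genuinely different route from the paper. Both arguments start from the same Riccati-type inequality
\[
\frac{d}{dt}\rho_{\max} \leq Q\rho_{\max}^2 + \rho_{\max}\,\aL_\a\rho(x_*),
\]
and both exploit the singularity of the kernel near the origin, but in different ways. The paper truncates the integral to a \emph{fixed} ball $\{|z|<r\}$, uses $\phi\geq \iota_\phi(r)$ there together with $\int_{|z|<r}(\rho(x_*+z)-\rho_{\max})\dz \leq 2\pi\cM - 2r\rho_{\max}$, and then picks $r$ once and for all so that $r\,\iota_\phi(r) > Q+1$ (this is exactly where hypothesis (iii) enters). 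That yields the purely quadratic ODE $\dot\rho_{\max}\leq -\rho_{\max}^2 + c\,\rho_{\max}$, and the bound follows by integration. You instead let the truncation radius $R\sim\cM/\rho_{\max}$ depend on the current value of $\rho_{\max}$ and combine it with a Chebyshev estimate on the super-level set $\{\rho>\rho_{\max}/2\}$, in the spirit of the Constantin--Vicol nonlinear maximum principle. This produces the superlinear dissipation $-\aL_\a\rho(x_*)\gtrsim\rho_{\max}^{1+\a}/\cM^\a$ and hence $\dot\rho_{\max}\leq Q\rho_{\max}^2 - c\,\rho_{\max}^{2+\a}/\cM^\a$. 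Both close: the paper's fixed-$r$ argument is more elementary and applies verbatim to the whole class of kernels satisfying (i)--(iii), since it only needs $\limsup_{r\to 0}r\,\iota_\phi(r)=\infty$ at a single scale; your version is tuned to the power-law kernel $\phi_\a$, requires a caveat $\rho_{\max}\gtrsim\cM$ so the ball $B_R(x_*)$ fits in $\T$ (which you do acknowledge), but in exchange delivers a quantitatively stronger, $\alpha$-dependent decay mechanism. The lower bound is essentially identical to the paper's, modulo harmless constant-factor slop in your displayed inequality. Neither is wrong; for the purposes of this lemma the paper's route is the shorter one.
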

\begin{remark}
The upper bound and the weaker lower-bound $\rho(\cdot,t)\gtrsim 1/t$ was established for the CS hydrodynamics \eqref{eq:CS} along the line of the 1D decay for commutator forcing \cite{ST2016}. We include both cases for completeness.
\end{remark}

\begin{proof}  Let us recall that the density equation can be rewritten as
\begin{equation}\label{ }
\rho_t + u \rho_x = - q \rho^2 + \rho \aL_\a(\rho).
\end{equation}
Let us evaluate at a point where the maximum $\rmax$ is reached at $\xmax$. We obtain
\[
\begin{split}
\ddt \rmax & = - q(\xmax,t) \rmax^2 + \rmax \int \phi(|z|) ( \rho(\xmax + z,t) - \rmax) \dz \\
& \leq Q \rmax^2 +  \rmax \int_{|z| <r} \phi(|z|) ( \rho(\xmax + z,t) - \rmax) \dz \\
& \leq Q \rmax^2 + \iota_\phi(r)  \rmax  (  \cM - r \rmax) = Q \rmax^2 + \iota_\phi(r) \cM \rmax - r \iota_\phi(r) \rmax^2.
\end{split}
\]
In view of assumption (iii) we can pick  $r$  large enough to satisfy $r \iota_\phi(r) > Q+1$, while according to (i) $ \iota_\phi(r)$ itself remains finite. We thus achieve inequality
\[
\ddt \rmax \leq -  \rmax^2 +  \iota_\phi(r) \cM \rmax,
\]
which establishes the upper bound by integration. 

As to the lower bound we argue similarly. Let $\rmin$ and $\xmin$ the minimum value of $\rho$ and a point where such value is achieved. We have
\[
\begin{split}
\ddt \rmin & \geq -Q \rmin^2 + \rmin \int_\T  \phi(|z|) ( \rho(\xmin + z,t) - \rmin) \dz\\
& \geq   -Q \rmin^2  +  \iota_\phi(2\pi)\rmin ( \cM - 2\pi \rmin) = -(Q +2\pi \iota_\phi(2\pi)) \rmin^2  + \iota_\phi(2\pi) M \rmin.
\end{split}
\]
In view of (ii) the linear term on the right hand side has a positive coefficient. This readily implies the uniform lower bound.
\end{proof}

Subsequently we focus solely on the critical case $\a=1$. The case $1<\a<2$ requires the same prerequisites as established in this section, however, incorporating it into the existing proof below would actually require  fewer steps due to excess of dissipation. We will skip those for the sake of brevity of what will already be a technical exposition.

\subsection{An initial approach towards \thm{t:flock-singular}} 
In this section we specialize on the case of critical singular kernel, and establish uniform control on the first order quantities $|\rho_x|_\infty$, $|u_x|_\infty$. We assume that we start with initial condition $(u_0,\rho_0) \in H^3(\T)$.  As shown in \cite{ST2016} there exists a global solution to \eqref{eq:FH} in the same space. So, we can perform all of the computations below as classical. 

First, let us recite one argument from \cite{ST2016}.  
Recall that the density $\rho$ satisfies the following parabolic form  of the density equation, expressed in terms of $\aL_1\equiv -\L$, 
\begin{equation}
\rho_t+u\rho_x+e\rho= -\rho \L(\rho), , \qquad \L(f)=p.v. \int_\R \frac{f(x)-f(y)}{|x-y|^2}dy.
\end{equation}
Similarly, one can write the equation for the momentum $m = \rho u$:
\begin{equation}
m_t+u m_x+e m  = -\rho \L(m).
\end{equation}
In both cases the drift $u$ and the forcing $e \rho$ or $e m$ are bounded a priori due to the maximum principle and \lem{l:r-bounds}. Moreover, the diffusion operator has kernel
\[
K(x,h,t) = \rho(x) \frac{1}{|h|^2}.
\]
Using lower bound on the density from \lem{l:r-bounds} we conclude that the kernel falls under the assumptions of Schwab and Silverstre \cite{SS2016} uniformly on the time line. A direct application of \cite{SS2016} implies that there exists an $\g>0$ such that
\begin{equation}\label{e:gamma}
\begin{split}
| \rho|_{C^\g(\T \times [t+1,t+2))} &\leq C( |\r|_{L^\infty(t,t+2)} + |\rho e |_{L^\infty(t,t+2)})\\
| m |_{C^\g(\T \times [t+1,t+2))} &\leq C( |m|_{L^\infty(t,t+2)} + |me|_{L^\infty(t,t+2)}) \\
| u |_{C^\g(\T \times [t+1,t+2))} &\leq C( |u|_{L^\infty(t,t+2)}, |\rho|_{L^\infty(t,t+2)}) ,
\end{split}
\end{equation}
holds for all $t>0$. Since the right hand sides are uniformly bounded on the entire line we have obtained uniform bounds on $C^\g$-norm starting from time $t=1$. Since we are concerned with long time dynamics let us reset initial time to $t=1$, and allow ourselves to assume that $C^\g$-norms are bounded from time $t=0$.

\begin{lemma}\label{l:rslope} We make the same assumptions stated in Theorem \ref{t:flock-singular}. Then the following  uniform bound holds
\begin{equation}\label{ }
\sup_{t \ge 0 }  |\rho_x(\cdot,t) |_\infty  <  \infty.
\end{equation}
\end{lemma}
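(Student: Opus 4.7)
The strategy is to run a maximum-principle argument for $M(t) := \|\rho_x(\cdot,t)\|_\infty$, building on the uniform bounds $0 < c \le \rho \le C$ from Lemma \ref{l:r-bounds} and the uniform $C^\gamma$ bound on $\rho$ displayed in \eqref{e:gamma}. The key observation is that the density equation, in its parabolic form $\rho_t + u\rho_x + e\rho + \rho\Lambda\rho = 0$, enjoys genuine fractional dissipation coming from the $\rho\Lambda\rho$ piece with a coefficient uniformly bounded below by $c$.

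Differentiating in $x$ and using $u_x = e + \Lambda\rho$ to absorb the $u_x \rho_x$ contribution, I obtain
\[
\partial_t \rho_x + u\, \partial_x\rho_x + \rho\,\Lambda\rho_x = -2e\rho_x - 2\rho_x\Lambda\rho - e_x\rho.
\]
I then examine this equation at a breakthrough point $\bar x(t)$ where $M(t) = |\rho_x(\bar x,t)|$ is attained. The transport term vanishes, and $\rho(\bar x)\Lambda\rho_x(\bar x)\cdot\mathrm{sign}(\rho_x(\bar x))$ is nonnegative by the standard pointwise inequality for $\Lambda$ at an extremum. To quantify, I apply a 1D nonlinear maximum principle \emph{à la} Constantin--Vicol together with the uniform $C^\gamma$ bound to derive a super-linear lower estimate $c\,|\Lambda\rho_x(\bar x)| \gtrsim M^{1+1/\gamma}/K$ once $M \gg 1$, where $K$ depends only on $\|\rho\|_{C^\gamma}$.

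The forcing term $-2e\rho_x$ is linear in $M$ via $e = q\rho$ and $\|q\|_\infty \le Q$. The term $-2\rho_x\Lambda\rho$ and the delicate $-e_x\rho$ are treated jointly by exploiting the conservative transport $e_t + (ue)_x = 0$: writing $e_x = q_x\rho + q\rho_x$ trades the a priori uncontrolled $e_x$ for the transported quantity $q_x$, which satisfies $\DDt q_x = -u_x q_x$. Combining this with the global flocking estimate of Lemma \ref{l:V} (exponential decay of the velocity variation) permits a Grönwall bound on $\int_0^t \|u_x(\cdot,s)\|_\infty \, ds$, and hence a uniform bound on $\|q_x(\cdot,t)\|_\infty$; the linear truncation argument for $\Lambda\rho(\bar x)$ in terms of $\|\rho\|_{C^\gamma}$ closes the estimate for $-2\rho_x\Lambda\rho$.

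Assembling the pieces, $M(t)$ satisfies a differential inequality of the form $\dot M \le C_1(1+M) - c_1 M^{1+1/\gamma}$, which by elementary ODE comparison yields the desired uniform bound. The main obstacle I expect is precisely this closure: the pointwise control of $\rho_x$ and $u_x$ is entangled through $u_x = e + \Lambda\rho$, so one must run the flocking decay and the parabolic dissipation in tandem. If the pointwise route proves too brittle, the fallback is to invoke Schauder-type regularity for nonlocal parabolic equations in the spirit of \cite{SS2016} applied to the $\rho$-equation with its now $C^\gamma$ kernel $\rho(x)/|h|^2$, $C^\gamma$ drift $u$, and bounded forcing $-e\rho$, upgrading $\rho$ uniformly from $C^\gamma$ to $C^{1,\gamma'}$ and thereby giving $\|\rho_x\|_\infty$ directly.
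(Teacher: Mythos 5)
Your overall strategy --- a maximum-principle argument on $|\rho_x|$ closed by a Constantin--Vicol nonlinear lower bound on the fractional dissipation --- matches the paper's, which defers to \cite[Section 6.2]{ST2016} and recalls the resulting inequality $\ddt |\rho'|^2 \le c_1 + c_2|\rho'|^2 - c_3\,\dD\rho'(x)$ at a maximum, then uses $\dD\rho'(x) \ge c\,|\rho'|^3_\infty/|\rho|_\infty$ to hide the quadratic term. However, there is a genuine gap in the way you close the $e_x$ term, and it is the crux of the lemma.

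You write $e_x = q_x\rho + q\rho_x$ and then propose to bound $\|q_x\|_\infty$ by integrating $\DDt q_x = -u_x q_x$ via Gr\"onwall on $\int_0^t\|u_x(\cdot,s)\|_\infty\,ds$, invoking Lemma~\ref{l:V}. This is circular and, moreover, the cited lemma does not supply what you need. Lemma~\ref{l:V} gives exponential decay of the velocity \emph{oscillation} $V(t)=\max_{x,y}|u(x,t)-u(y,t)|$, which controls neither $\|u_x\|_\infty$ nor its time integral --- a function of small amplitude can have arbitrarily large slope. The exponential decay of $\|u_x\|_\infty$ is Lemma~\ref{l:vanslope}, which is proved \emph{after} the present lemma and whose proof uses it. The correct (and much cleaner) closure is purely algebraic: from $\DDt q = 0$ and $\DDt\rho = -u_x\rho$ one computes $\DDt\bigl(q_x/\rho\bigr) = 0$, so $q_x/\rho$ is transported and uniformly bounded by its initial value, and then $q_x = (q_x/\rho)\,\rho$ is uniformly bounded via Lemma~\ref{l:r-bounds}. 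No decay of $u_x$ is required. This transported-quantity observation is exactly what \cite{ST2016} uses, and it is restated in the paper just before Lemma~\ref{l:vanslope2}.

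Two smaller remarks. First, the $C^\gamma$-based nonlinear maximum principle you invoke, giving $\dD\rho' \gtrsim M^{1+1/\gamma}/K$, does suffice since $1+1/\gamma>2$; but the simpler $L^\infty$-based Constantin--Vicol bound $\dD\rho'(x) \ge c\,|\rho'|^3_\infty/|\rho|_\infty$, combined with the uniform upper bound on $\rho$ from Lemma~\ref{l:r-bounds}, already produces a cubic dissipation that dominates the quadratic term; this is what the paper does and it avoids tracking the H\"older exponent from \eqref{e:gamma}. Second, your Schauder-type fallback via \cite{SS2016} is not obviously a shortcut: to upgrade from $C^\gamma$ to $C^{1,\gamma'}$ one needs H\"older control on the forcing $e\rho$ and the drift $u$, and bootstrapping that uniformly in time is at least as delicate as the pointwise argument you sketched.
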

\begin{proof} The argument goes  verbatim as presented in \cite[Section 6.2]{ST2016}  with all the constants involved being uniform in view of the established bounds above.  We recall the penultimate inequality
\[
\ddt |\rho'|^2 \leq c_1 + c_2 |\rho'|^2 - c_3 \dD \rho'(x),
\]
where all the quantities are evaluated at a point of maximum of $|\r'|$, and where
\[
\dD\rho'(x) = \int_\R  \frac{|\rho'(x) - \rho'(x+z)|^2}{|z|^2} dz.
\]
Using the nonlinear maximum bound from \cite{CV2012}
\[
\dD\rho'(x) \geq c_4 \frac{|\rho'|_\infty^3}{|\rho|_\infty} \geq c_5 |\rho'|_\infty^3,
\]
we can further hide the quadratic term into dissipation to obtain
\begin{equation}\label{e:rho-derb}
\ddt |\rho'|^2 \leq c_6 - c_7 \dD \rho'(x) \leq c_6 - c_8 |\rho'|^3.
\end{equation}
This enables us to conclude the Lemma. 
\end{proof}

We turn to study the flocking behavior in the singular case.
To this end, we first prepare preliminary estimates on the dissipation terms to be encountered in the sequel. Below is an improvement on nonlinear maximum principle bound of \cite{CV2012} in the case of small amplitudes. As a byproduct we obtain a trilinear estimate that will be used in the sequel. 

\begin{lemma}[Enhancement of dissipation by small amplitudes] \label{l:enhance} Let $u \in C^1(\T)$ be a given function with amplitude $V = \umax - \umin$. There is an absolute constant $c_1 >0$ such that the following pointwise estimate holds
\begin{equation}\label{eq:x1}
\dD u'(x) \geq c_1 \frac{|u'(x)|^3}{V}.
\end{equation}
In addition, there is an absolute constant $c_2>0$ such that for all $B>0$ one has
\begin{equation}\label{eq:x2}
\dD u'(x) \geq B |u'(x)|^2 - c_2 B^3 V^2.
\end{equation}
\end{lemma}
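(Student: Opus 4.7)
The plan for \eqref{eq:x1} is to combine a one-sided telescope identity for $u'(x_0)$ with Cauchy--Schwarz, with the free radius optimized at the end. Fix $x_0$, write $M = |u'(x_0)|$ and $D = \dD u'(x_0)$; one may assume $D < \infty$, else the bound is vacuous. For any $r > 0$, the fundamental theorem of calculus gives
\[
r\, u'(x_0) = \bigl[u(x_0+r) - u(x_0)\bigr] + \int_0^r \bigl(u'(x_0) - u'(x_0+z)\bigr) \dz,
\]
where the first bracket is bounded by $V$ in absolute value. The integral on the right is the one I would estimate by introducing the weight $1/|z|$ and applying Cauchy--Schwarz, producing a factor $\sqrt{r^3/3}\cdot \sqrt{D}$. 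Dividing by $r$ yields $M \leq V/r + \sqrt{Dr/3}$, and optimizing the free parameter $r$ (balance occurs at $r \sim (V^2/D)^{1/3}$) gives $M^3 \lesssim VD$, which is \eqref{eq:x1}.

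For \eqref{eq:x2}, I would use a short dichotomy on the size of $|u'(x)|$ relative to $BV$. If $|u'(x)| \geq (B/c_1)\, V$, then by \eqref{eq:x1}
\[
\dD u'(x) \geq c_1 \frac{|u'(x)|^3}{V} = c_1 \frac{|u'(x)|}{V}\, |u'(x)|^2 \geq B\,|u'(x)|^2,
\]
so \eqref{eq:x2} holds (the $-c_2 B^3 V^2$ correction is only favorable). Otherwise $|u'(x)| < (B/c_1)\, V$, so $B|u'(x)|^2 < B^3 V^2/c_1^2$; choosing $c_2 \geq 1/c_1^2$ then makes the right-hand side of \eqref{eq:x2} non-positive and the inequality is trivially satisfied by $\dD u'(x) \geq 0$.

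The essential step is the Cauchy--Schwarz estimate: the particular choice of weight $|z|^{-1}$ inside the integral (matched with $|z|$ outside, so that $\int_0^r z^2 \dz$ produces an $r^{3/2}$ factor) is what makes the two competing terms $V/r$ and $\sqrt{Dr/3}$ balance at the critical scale $r \sim (V^2/D)^{1/3}$ and produce the correct scaling $|u'|^3/V$. I expect no nonlinear maximum-principle input beyond this identity to be needed, since $V$ is a global oscillation rather than a pointwise $L^\infty$ bound, and the telescope identity is exactly what transfers that global information into a pointwise bound on $u'$.
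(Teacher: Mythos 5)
Your proof is correct, and it takes a genuinely different route from the paper's. For \eqref{eq:x1} the paper follows Constantin--Vicol: it expands the square in $\dD u'(x)$, drops the positive $|u'(x+z)|^2$ term, truncates to $|z|>r$, and integrates by parts in the cross term $\int_{|z|>r}u'(x+z)|z|^{-2}\dz$ to expose the oscillation $u(x+z)-u(x)$, arriving at the intermediate pointwise bound $\dD u'(x)\geq |u'(x)|^2/r - c_0|u'(x)|V/r^2$, then optimizes $r$. You instead use the telescope identity $r\,u'(x)=[u(x+r)-u(x)]+\int_0^r(u'(x)-u'(x+z))\dz$ and apply Cauchy--Schwarz with the weight $|z|^{-1}\cdot|z|$ to the second term, giving $|u'(x)|\leq V/r+\sqrt{Dr/3}$ where $D=\dD u'(x)$, and optimize. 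Both arrive at $|u'(x)|^3\lesssim V\dD u'(x)$, but your route avoids integration by parts against the singular kernel and is a little more elementary. For \eqref{eq:x2} the difference is sharper: the paper goes back to the intermediate pointwise bound, substitutes $r=1/(2B)$, and applies Young's inequality to the cross term; you observe that \eqref{eq:x2} is a formal consequence of \eqref{eq:x1} together with the nonnegativity of $\dD u'$, via the dichotomy $|u'(x)|\gtrless BV/c_1$ (if large, \eqref{eq:x1} already gives $\dD u'\geq B|u'|^2$; if small, the right side of \eqref{eq:x2} is $\leq 0$ for $c_2\geq c_1^{-2}$). Your derivation of \eqref{eq:x2} is cleaner in that it makes the logical dependence on \eqref{eq:x1} explicit, at the minor cost of losing a tiny bit of quantitative sharpness in the constant $c_2$ compared to the direct Young estimate. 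Both are fully rigorous.
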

\begin{proof} We start as in \cite{CV2012}. Using smooth truncations in the integrals we obtain, for every $r>0$:
\[
\dD u'(x) \geq \int_{|z|>r} \frac{ |u'(x)|^2 - 2 u'(x+z) u'(x) }{|z|^2} \dz,
\]
where we dropped the positive term with $|u'(x+z)|^2$.  Now, using that $u'(x+z) = u_z(x+z) = (u(x+z) - u(x))_z$ we can integrate by parts in the second term to obtain
\[
\dD u'(x) \geq  \frac{|u'(x)|^2}{r} + 4 u'(x) \int_{|z|>r} \frac{u(x+z) - u(x)}{|z|^4} z \dz \geq  \frac{|u'(x)|^2}{r} - c_0 |u'(x)| V \frac{1}{r^2}.
\]
By picking $r = \frac{2c_0 V}{|u'(x)|}$ we obtain \eqref{eq:x1}. Picking $r = \frac{1}{2B}$ and using Young's inequality ,
\[
\dD u'(x) \geq 2B |u'(x)|^2 -4c_0 |u'(x)|V B^2 \geq B |u'(x)|^2 - 16c_0B^3 V^2,
\]
and \eqref{eq:x2} follows.
\end{proof}

We proceed to the exponential decay of $u_x$ and $u_{xx}$  which is quantified  in the next two lemmas, in terms of constants $C,\d >0$, depending on $\cM, |u_0|_{H^3}$ and additional parameters specified below.
\begin{lemma} \label{l:vanslope} We make the same assumptions stated in Theorem \ref{t:flock-singular}. There exist constants $C, \d>0$ such that for all $t>0$ one has
\begin{equation}\label{ }
|u_x(\cdot,t) |_\infty \leq Ce^{-\d t}.
\end{equation}
\end{lemma}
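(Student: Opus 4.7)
The plan is to write the evolution equation for $u_x$ at a spatial extremum and combine the nonlinear-maximum-principle dissipation produced by $\aL_1$ with the already-established exponential decay of the velocity diameter $V(t) \lesssim e^{-\cM \iota_\phi t}$ from Lemma~\ref{l:V}. A Riccati-type bootstrap then forces $M(t) := |u_x(\cdot,t)|_\infty$ to decay exponentially as well.

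Differentiating the momentum equation in $x$ gives
$$(u_x)_t + u(u_x)_x = -u_x^2 + [\aL_1, u]\rho_x + [\aL_1, u_x]\rho.$$
At a spatial maximum $\xmax$ of $u_x$ (the minimum is handled symmetrically), $u_{xx}(\xmax)=0$ and the transport term vanishes. The integrand of $[\aL_1, u_x]\rho$ is non-positive at $\xmax$; combining this with the uniform lower bound $\rho \geq c_\rho >0$ from Lemma~\ref{l:r-bounds} gives
$$[\aL_1, u_x]\rho(\xmax) \leq c_\rho\,\aL_1 u_x(\xmax) = -c_\rho\,\L u_x(\xmax),$$
and the enhanced dissipation estimate \eqref{eq:x1} of Lemma~\ref{l:enhance} applied to $u$ yields
$$\L u_x(\xmax) = \dD u_x(\xmax) \geq c_1\,\frac{M(t)^3}{V(t)}.$$

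The non-dissipative commutator $[\aL_1, u]\rho_x(\xmax)$ is the core technical issue. I would decompose it symmetrically as
$$[\aL_1, u]\rho_x(\xmax) = \rho_x(\xmax)\,\aL_1 u(\xmax) + \int \phi_1(|z|)\bigl(u(\xmax+z) - u(\xmax)\bigr)\bigl(\rho_x(\xmax+z) - \rho_x(\xmax)\bigr)\,dz,$$
and control each piece using the uniform $L^\infty$ bound on $\rho_x$ from Lemma~\ref{l:rslope}, together with a uniform H\"older bound on both $u_x$ and $\rho_x$ obtained by bootstrapping the parabolic Schwab--Silvestre estimate \eqref{e:gamma}. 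Splitting the integrals at a scale $r$ and exploiting $|u(\xmax+z) - u(\xmax)| \leq \min(V(t), M(t)|z|)$, exactly in the spirit of the optimization carried out in \eqref{eq:x2}, produces
$$\bigl|[\aL_1, u]\rho_x(\xmax)\bigr| \leq C\,M(t)^{1-\sigma}V(t)^\sigma + C\,V(t)^{\sigma'}$$
for some $\sigma, \sigma' \in (0,1)$.

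Collecting all the bounds yields the differential inequality
$$\dot M(t) \leq -c_0\,\frac{M(t)^3}{V(t)} + C\,M(t)^{1-\sigma}V(t)^\sigma + C\,V(t)^{\sigma'},$$
and the exponential decay $V(t) \lesssim e^{-\delta_0 t}$ closes the argument by bootstrap: whenever $M(t)$ exceeds a threshold of the form $V(t)^{\tau}$ with a suitable $\tau < 1$, the cubic dissipation dominates and drives $M$ down; once below this threshold, $M$ inherits the exponential decay of $V$. The main obstacle is the quantitative control of the singular commutator $[\aL_1, u]\rho_x$; this is resolved by combining the cancellation exposed by the symmetric decomposition above with a uniform H\"older modulus of continuity of $\rho_x$ (obtained by iterating the parabolic regularity \eqref{e:gamma} on the density equation now that $|\rho_x|_\infty$ is under control).
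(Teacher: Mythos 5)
Your overall strategy---differentiate the momentum equation, evaluate at the spatial extremum of $u_x$, extract a Riccati-type inequality from the dissipation plus the known exponential decay of $V(t)$---is the same as the paper's. However, two of the key technical steps contain genuine gaps.

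First, the identification $\L u_x(\xmax) = \dD u_x(\xmax)$ is false: $\L f(x) = \text{p.v.}\int\frac{f(x)-f(y)}{|x-y|^2}\,dy$ is \emph{linear} in the increments of $f$, while $\dD f(x) = \int\frac{|f(x)-f(y)|^2}{|x-y|^2}\,dy$ is \emph{quadratic}. Lemma~\ref{l:enhance} gives a lower bound on $\dD u'$, not on $\L u'$. The correct move, and what the paper does, is to track $\ddt |u_x|^2$ rather than $\ddt u_x$: then the good term is $u'(\xmax)\aT(\rho,u')(\xmax)$, and the pointwise identity $(u'(y)-u'(x))u'(x) \le -\tfrac12|u'(y)-u'(x)|^2$ (valid at a maximizer of $|u'|$) converts the linear dissipation into the quadratic quantity $\dD u'(\xmax)$, after which Lemma~\ref{l:enhance} applies. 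Alternatively, if you insist on tracking $\ddt M$ directly, you must first invoke a C\'ordoba--C\'ordoba-type inequality $u'(\xmax)\L u'(\xmax) \ge \tfrac12 \dD u'(\xmax)$, which gives the weaker dissipation $\L u'(\xmax) \ge c\,M^2/V$, not $M^3/V$.

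Second, your symmetric decomposition $[\aL_1, u]\rho_x = \rho_x\,\aL_1 u + \int \phi_1(|z|)\bigl(u(\cdot+z)-u(\cdot)\bigr)\bigl(\rho_x(\cdot+z)-\rho_x(\cdot)\bigr)\,dz$ does not yield the claimed bound. The piece $\rho_x(\xmax)\aL_1 u(\xmax)$ is a principal-value integral; the naive estimate $|u(\xmax+z)-u(\xmax)| \le \min(V, M|z|)$ fails because $\int_{|z|<r} M|z|/|z|^2\,dz$ diverges at the origin---you need the cancellation $\d_z u(\xmax) - z u'(\xmax)$ to tame the singularity, at which point the bound is $|\aL_1 u| \lesssim \dD^{1/2}u'(\xmax) + V$, not a power of $V$. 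This is only absorbable if it is multiplied by $u'(\xmax)$, i.e. again inside the $\ddt|u_x|^2$ balance. Moreover, your second (symmetric) piece has only one order of cancellation and therefore requires a uniform H\"older bound on $\rho_x$ to integrate near $z=0$. Lemma~\ref{l:rslope} gives only $|\rho_x|_\infty<\infty$, and the parabolic estimate \eqref{e:gamma} controls $\rho$, $m$, $u$ in $C^\gamma$, not their first derivatives---bootstrapping to a uniform-in-time Schauder bound on $\rho_x$ would require an additional nonlocal parabolic regularity argument that is nowhere established (and indeed the H\"older control on $\rho''$ in Corollary~\ref{c:rxx} is derived \emph{after} this lemma and would be circular here). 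The paper avoids this entirely by putting both orders of cancellation on $u$: it writes $\aT(\rho',u)u' = |u'(x)|^2\L\rho + u'(x)\int\rho'(x+z)\frac{\d_z u(x) - z u'(x)}{|z|^2}\,dz$, exploiting that $\L\rho = e - u_x$ is a priori bounded, and using $|\d_z u(x)-zu'(x)|\le |z|^{3/2}\dD^{1/2}u'(x)$ with only the $L^\infty$ bound on $\rho'$. This is the decisive structural difference: it costs nothing you don't already have, whereas your decomposition incurs two requirements---p.v.\ cancellation in $\aL_1 u$, and H\"older regularity of $\rho_x$---that you do not actually possess at this stage of the argument.
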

\begin{proof}
Differentiating the $u$-equation and evaluating at a point of maximum we obtain
\[
\ddt |u'|^2 \leq |u'|^3 + \aT(\rho', u) u'  + \aT(\rho, u') u', \qquad \aT(\rho,u):=-\L(\rho u)+u\L(\rho).
\]
First, as to the dissipation term, let us fist observe
\[
(u'(y) - u'(x))u'(x)  = - \frac12 |u(y) - u(x)|^2 + \frac12( |u'(y)|^2 - |u'(x)|^2 )\le - \frac12 |u(y) - u(x)|^2.
\]
Thus, in view of \lem{l:r-bounds}, 
\[
\aT(\rho, u') u'  (x) \le - c_1 \dD u'(x).
\]
Let us turn to the remaining term $\aT(\rho', u) u' $.   We have
\[
\aT(\rho', u) u' = |u'(x) |^2 \L \rho + u'(x) \int \rho'(x+z) \frac{\d_z u(x) - z u'(x)}{|z|^2} \dz.
\]
Expressing $\L \rho = e- u_x$ the first term, and using uniform bound on $\rho_x$ from \lem{l:rslope} we obtain the bound  
\[
| \aT(\rho', u) u'  | \leq c_2 |u'|^3 + c_3 |u'|^2 + c_4 |u'| \left| \int \rho'(x+z)\frac{ \d_z u(x) - z u'(x)}{|z|^2} \dz \right|.
\]
To bound the integral we split it into the long range $\{|z| >\pi\}$ and short range $\{|z| \leq \pi\}$ parts. For the  short range  we use the bound
\[
| \d_z u(x) - z u'(x) |\leq  \int_0^z |u'(x+w) - u'(x)| \dw \leq |z|^{3/2} \dD^{1/2} u'(x),
\]
So,
\[
\left| \int_{|z|\leq \pi} \rho'(x+z)\frac{ \d_z u(x) - z u'(x)}{|z|^2} \dz\right| \leq |\rho'|_\infty  \dD^{1/2} u'(x) \leq C\dD^{1/2} u'(x).
\]
For the long range part, we apply the following argument (to be used several other times in the sequel). We have
\begin{equation}\label{e:triple1}
\begin{split}
&\int_{|z| \geq \pi}  \frac{\rho'(x+z)\d_zu(x)}{|z|^2} \dz =\sum_{k\neq 0} \int_{|z|\leq \pi}  \frac{\rho'(x+z)\d_zu(x)}{|z + 2\pi k |^2} \dz \\
&\leq c_6 \sum_{k\neq 0} \frac{1}{k^2} \int_{|z|\leq \pi}  |\rho'(x+z)||z||u'|_\infty \dz \leq c_7 |u'|_\infty |\rho'|_1 \leq C |u'|_\infty. 
\end{split}
\end{equation}
Proceeding to the second part,
\begin{equation}\label{e:triple2}
\begin{split}
& \int_{|z| \geq \pi}  \frac{\rho'(x+z) u'(x)}{z} \dz = \sum_{k\neq 0} \int_{|z|\leq \pi}  \frac{\rho'(x+z)u'(x)}{z + 2\pi k} \dz \\
&=  \sum_{k>0} \int_{|z|\leq \pi}  \rho'(x+z)u'(x) \frac{2z}{|z|^2 - 4\pi^2 k^2} \dz \leq |u'|_\infty |\rho'|_1 \leq C|u'|_\infty.
 \end{split}
\end{equation}
Putting the estimates together we obtain
\[
| \aT(\rho', u) u'  |  \leq c_2 |u'|^3 + c_4 |u'|^2 + c_5 |u'| \dD^{1/2} u'(x) \leq c_2 |u'|^3 + c_6 |u'|^2  + \frac12 c_1\dD u'(x).
\]
Altogether we have obtained, resetting the constant counter,
\[
\ddt |u'|^2  \leq  c_2 |u'|^3 + c_6 |u'|^2 - c_7 \dD u'(x).
\]
In view of \lem{l:enhance}, we have 
\[
\ddt |u'|^2  \leq c_8 V -  c_9 |u'|_\infty^2,
\]
Integrating we discover that $|u'| \to 0$ at an exponential rate of at least $\frac12 \min\{c_{9}, cI_\phi\}$, the latter being the rate of decay of $V$. This finishes the proof.
\end{proof}

The estimates established so far are sufficient to prove an initial version of \thm{t:flock-singular} as stated in \lem{l:flock-bdd}. The proof the lemma goes ad verbatim in the present case.  

\subsection{Completing the proof of \thm{t:flock-singular}}  In  this section we perform  computations with the goal to show that the flocking proved in  \lem{l:flock-bdd} takes place in all spaces up to $H^3$, and that the limiting profile $\bar{\rho}$ itself belongs to $H^3$.  We prove exponential flattening of $u$ in terms of curvature $|u''|_\infty$ and third derivative $u'''$ in $L^2$. This complements the statement of  \lem{l:flock-bdd} to the full extent of \thm{t:flock-singular}. 

We start by showing exponential decay of $|u''|_\infty$. As before we denote by $E= E(t)$ any quantity with an exponential decay, e.g. $|u'|_\infty =E$, or $V = E$.  Thus, according to \lem{l:enhance}, we have pointwise bounds
\begin{equation}\label{e:Duxx}
\begin{split}
\dD u''(x) & \geq \frac{|u''(x)|^3}{E},\\
\dD u''(x) & \geq B |u''(x)|^2 - C(B) E.
\end{split}
\end{equation}
Due to these bounds the dissipation term absorbs all cubic and quadratic terms with bounded coefficients. It does the latter at the cost of adding a free $E$-term unattached to either $u$ or $\rho$. Finally, let us recall from \cite{ST2016} that the quantity $Q= (e/\rho)_x / \rho$ is transported:  $ Q_t + u Q_x = 0$. As such, it remains bounded uniformly for all times. Expressing $e'$ from $Q$, we see that $e'$ is controlled by $\rho'$ and $\rho$, which in view of \lem{l:rslope} implies uniform bound on $e'$:
\begin{equation}\label{e:ex}
\sup_t |e'(\cdot, t)|_\infty <\infty.
\end{equation}
Using this additional piece of information we are in a position to prove control of the curvature.

\begin{lemma} \label{l:vanslope2} We make the same assumptions stated in Theorem \ref{t:flock-singular}. There are constants $C, \d>0$ such that for all $t>0$ one has
\begin{equation}\label{ }
|u_{xx}(\cdot, t) |_\infty \leq Ce^{-\d t}.
\end{equation}
\end{lemma}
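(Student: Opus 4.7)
The plan is to promote the one-derivative argument of Lemma \ref{l:vanslope} to the second derivative. Differentiating the momentum equation $u_t + uu_x = \aT(\rho,u)$ twice in $x$ and composing along particle paths gives
\begin{equation*}
\DDt u'' = -3 u' u'' + \aT(\rho, u'') + 2\aT(\rho', u') + \aT(\rho'', u).
\end{equation*}
Evaluating at a point $x_0 = x_0(t)$ where $|u''|^2$ attains its spatial maximum upper-bounds
\begin{equation*}
\tfrac12 \ddt |u''|_\infty^2 \leq -3 u'(x_0)|u''|^2 + u''(x_0)\bigl[\aT(\rho, u'') + 2\aT(\rho', u') + \aT(\rho'', u)\bigr](x_0).
\end{equation*}

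The diagonal term supplies the dissipation: at the maximum, the elementary identity $u''(x_0)(u''(x_0+z)-u''(x_0)) \leq -\tfrac12(u''(x_0+z)-u''(x_0))^2$ combined with the uniform lower bound $\rho \geq c > 0$ of Lemma \ref{l:r-bounds} yields $u''(x_0)\aT(\rho,u'')(x_0) \leq -c_1 \dD u''(x_0)$, while the drift term is absorbed as $3|u'(x_0)||u''|^2 \leq c_2 E|u''|^2$ using Lemma \ref{l:vanslope}. For the mixed commutators I would split each $z$-integral into short range $\{|z|\leq \pi\}$ and long range $\{|z|>\pi\}$, and recycle the periodization argument of \eqref{e:triple1}--\eqref{e:triple2} for the latter. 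The term $u''(x_0)\aT(\rho',u')(x_0)$ is estimated with the Cauchy--Schwarz bound $|u'(x_0+z)-u'(x_0)| \leq |z|^{1/2} \dD^{1/2} u'(x_0)$ paired with the uniform $|\rho'|_\infty < \infty$ from Lemma \ref{l:rslope}, producing a quadratic dissipation term in $u'$ that is itself $E$ by Lemma \ref{l:vanslope}. The genuinely delicate term is $\aT(\rho'',u)$: since $\rho''$ is not known to be pointwise bounded, I would integrate by parts in $z$ — writing $\rho''(x_0+z)\,dz = d\rho'(x_0+z)$ and first subtracting the $z$-linear Taylor term of $u$ about $x_0$ so as to desingularize the kernel at zero — thereby transferring the derivative off $\rho$ onto the factor $\phi(|z|)(u(x_0+z)-u(x_0))$. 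The resulting expressions are bilinear in the uniformly bounded $\rho'$ and in the exponentially small factors $u(x_0+\cdot)-u(x_0)$ (of size $\min(|z|,1)V = E$) and $u'$.

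Collecting all the estimates produces a differential inequality of the form
\begin{equation*}
\ddt |u''|^2 \leq -c_1 \dD u''(x_0) + c_3 E|u''|^2 + c_4 E,
\end{equation*}
at which point Lemma \ref{l:enhance}, in the form $\dD u''(x_0) \geq B|u''|^2 - C(B)V^2$ with $V = E$ and $B$ chosen fixed but large enough to dominate $c_3 E$ once $t$ is sufficiently large, reduces matters to $\ddt|u''|^2 \leq -c_5|u''|^2 + CE$, and Grönwall delivers the claimed bound $|u''|_\infty \leq Ce^{-\delta t}$. The main technical obstacle is the treatment of $\aT(\rho'',u)$: because no $L^\infty$ control on $\rho''$ is available a priori, the cancellation-preserving integration by parts at the singularity of $\phi$ is the key step, and must be carried out with care to track which negative powers of $|z|$ are generated by the derivative falling on $\phi$ and to ensure the remaining short-range integrals are matched by the exponentially small factors or absorbed into $\tfrac12 c_1\dD u''(x_0)$.
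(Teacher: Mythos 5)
Your overall skeleton is right and matches the paper's: differentiate twice, evaluate at a point of maximum of $|u''|$, extract dissipation $-c_0\dD u''(x_0)$ from $\aT(\rho,u'')u''$, and close with \lem{l:enhance} in the form $\dD u''(x_0)\geq B|u''|^2-C(B)E$. Your ODE $\DDt u'' = -3u'u'' + \aT(\rho,u'')+2\aT(\rho',u')+\aT(\rho'',u)$ is correct. However, the heart of the lemma lies in how one controls $\aT(\rho'',u)u''$, and here your plan has a genuine gap.

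First, you omit an ingredient the paper relies on crucially: the uniform bound $\sup_t|e'(\cdot,t)|_\infty<\infty$ in \eqref{e:ex}, which follows from the transport of $Q=(e/\rho)_x/\rho$. After you subtract the $z$-linear Taylor term of $u$, a residual $u'(x_0)\int\phi(|z|)\rho''(x_0+z)\,z\,dz\sim \L\rho'(x_0)\,u'(x_0)$ is produced, so the term $\L\rho'(x_0)u'(x_0)u''(x_0)$ is unavoidable; your description of the integration by parts quietly drops it. To bound $\L\rho'$ one uses $\L\rho'=e'-u''$, which is where \eqref{e:ex} enters. The same bound also underlies the only available control on $|\rho''|_1$ (namely $|\rho''|_1\leq|\rho''|_2=|\L\rho'|_2\leq|e'|_2+|u''|_2$), which you implicitly need when you periodize the long-range part of $\aT(\rho'',u)$ as in \eqref{e:triple1}--\eqref{e:triple2}: that argument returns $|u'|_\infty|\rho''|_1$, not $|u'|_\infty|\rho'|_1$. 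Without \eqref{e:ex} your proof does not close.

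Second, your claim that after the IBP ``the resulting expressions are bilinear in the uniformly bounded $\rho'$ and in the exponentially small factors'' is not right. Once the derivative falls on $K(z)=\phi(|z|)(\d_z u(x_0)-zu'(x_0))$, the leading $u''(x_0)/z$ contributions cancel and one is left with \emph{second-order Taylor remainders of $u$ about $x_0$}, controlled by $\dD^{1/2}u''(x_0)$ and by $|u''|_\infty$, not by $E$. Similarly, your intermediate bound for $\aT(\rho',u')u''$ via $\dD^{1/2}u'(x_0)$ is not ``itself $E$''; one only has $\dD u'(x_0)\lesssim |u'|_\infty|u''|_\infty=E|u''|_\infty$. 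These observations do not sink the strategy---the resulting quadratic and cubic terms in $|u''|$ without $E$-prefactors are absorbable by \eqref{e:Duxx} exactly as in the paper---but they mean your final display $\ddt|u''|^2\leq -c_1\dD u''(x_0)+c_3E|u''|^2+c_4E$ is too optimistic as stated. For the record, the paper avoids the IBP entirely: it uses the pointwise bound $|\d_z u(x)-zu'(x)|\leq |z|^2|u''|_\infty$ to kill the $|z|^{-2}$ singularity and then controls $|\rho''|_1$ through $e'$; either route can work, but only once the $e'$ bound is in hand.
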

\begin{proof} Evaluating the $u$-equation at a point of maximum and performing the same initial steps as in \lem{l:vanslope} we obtain
\begin{equation}\label{e:uxx}
\ddt |u''|^2 \leq E |u''|^2 - c_0 \dD u''(x) + \aT(\r'', u) u'' + 2 \aT(\rho',u')u''.
\end{equation}
We have
\[
\aT(\r'', u) u''(x) =  \int_\R  \frac{\rho''(x+z)( \d_zu(x) - z u'(x)) u''(x)}{|z|^2} \dz+ \L \rho'(x) u'(x) u''(x).
\]
For the $\L \rho'$ term, in view of \eqref{e:ex}, we argue that $|\L \rho'| = |e' - u'' | \leq c_1 + |u''|$. Thus,
\begin{equation}\label{ }
| \L \rho'(x) u'(x) u''(x) | \leq E (|u''| + |u''|^2).
\end{equation}
As to the integral term, first, we handle the short range part as usual: 
\[
 \int_{|z| \leq \pi}  \frac{\rho''(x+z)( \d_zu(x) - z u'(x)) u''(x)}{|z|^2} \dz \leq |u''|_\infty^2 \int_{|z| \leq \pi} | \rho''(x+z)| \dz  \leq  |u''|_\infty^2 |\rho''|_1.
\]
However, note that $|\rho''|_1 \leq |\rho''|_2 \leq |\L \rho'|_2 \leq |e'|_2 + |u''|_2 \leq c_2 +c_3 |u''|_\infty$. Putting all estimates together we obtain
\begin{equation}\label{ }
|\aT(\r'', u) u'' | \leq E |u''| + c_4 |u''|^2 + c_5 |u''|^3  +\int_{|z| \geq \pi}  \frac{\rho''(x+z)( \d_zu(x) - z u'(x)) u''(x)}{|z|^2} \dz .
\end{equation}
As the for long range integral extra care is needed due to periodicity of functions, and we have to avoid having first degree term $|u''|$ appearing without exponentially decaying weight.  So, performing exactly the same computation as in \eqref{e:triple1} - \eqref{e:triple2}, with $\rho'$ replaced by $\rho''$ we get
\[
\left| \int_{|z| \geq \pi}  \frac{\rho''(x+z)(\d_zu(x)+ zu'(x))}{|z|^2} \dz \right| \leq  c_6 |u'|_\infty |\rho''|_1 \leq  E (1+ |u''|_\infty).
\]
Collecting the estimates we obtain
\[
|\aT(\r'', u) u'' | \leq E |u''| + c_8 |u''|^2 + c_9 |u''|^3.
\]
To bound the remaining term $ \aT(\rho',u')u''$ we will make use of the dissipation. 
\[
\aT(\r', u') u'' =  \int_\R  \frac{\rho'(x+z)( \d_zu'(x) - z u''(x)) u''(x)}{|z|^2} \dz+ \L \rho(x) |u''(x)|^2.
\]
We have
\[
| \L \rho(x) | |u''(x)|^2 = |e - u'| |u''|^2 \leq c |u''|^2.
\]
For the small scale part we have
\[
\begin{split}
\left| \int_{|z|<\pi}  \frac{\rho'(x+z)( \d_zu'(x) - z u''(x)) u''(x)}{|z|^2} \dz  \right| &\leq |u''|  \int_{|z|<\pi}  \frac{|\rho'|_\infty \dD^{1/2} u''(x) }{|z|^{1/2}} \dz \\
& \leq  c_{10} |u''| \dD^{1/2} u''(x) \leq c_{11} |u''|^2 + \frac{c_0}{4}\dD u''(x).
\end{split}
\]
For the large scale part we have
\[
\left| \int_{|z|\geq \pi}  \frac{\rho'(x+z)( \d_zu'(x) - z u''(x)) u''(x)}{|z|^2} \dz\right| \leq |\rho'||u''|^2.
\]
Thus,
\[
 |\aT(\r', u') u'' | \leq E | u'' |_\infty+ c_{14} | u'' |_\infty^2 + \frac{c_0}{4}\dD u''(x).
\]
Gathering the obtained estimates into  \eqref{e:uxx} we obtain
\begin{equation}\label{e:uxx2}
\ddt |u''|^2 \leq E | u'' |_\infty+ c_{15} | u'' |_\infty^2+c_{16} | u'' |_\infty^3 - c_{17} \dD u''(x).\end{equation}
Furthermore, $E | u'' |_\infty \lesssim E^2 + |u''|^2_\infty$. In view of \eqref{e:Duxx} the dissipation term absorbs the quadratic and cubic terms, and we are left with
\begin{equation}\label{ }
\ddt |u''|^2 \leq E - c_{18} \dD u''(x) \lesssim E -  |u''(x) |^2.
\end{equation}
This finishes the proof.
\end{proof}

\begin{corollary}\label{c:rxx}
We have, for every $1\leq p < \infty$,
\[
\sup_t  |H\rho''(\cdot, t)|_\infty <\infty.
\]
\end{corollary}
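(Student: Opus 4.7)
The plan is to exploit the transport identity $e = u_x + \aL_1(\rho) = u_x - \L \rho$, valid in the critical case $\alpha = 1$, where $\L := (-\partial_{xx})^{1/2}$ admits the factorization $\L = H\partial_x$ with $H$ the periodic Hilbert transform. Differentiating once in $x$ gives the pointwise identity
\[
H\rho''(x,t) \;=\; \L \rho'(x,t) \;=\; u''(x,t) - e'(x,t),
\]
so everything reduces to controlling the right-hand side uniformly in $t$.

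For the first term I would simply quote \lem{l:vanslope2}, which gives $|u''(\cdot,t)|_\infty \leq Ce^{-\delta t}$, and therefore in particular a uniform-in-$t$ bound. For the second term I would invoke the bound \eqref{e:ex}, $\sup_t |e'(\cdot,t)|_\infty < \infty$, which was derived from the transport of $Q = (e/\rho)_x/\rho$ along characteristics together with the uniform pointwise control on $\rho$ and $\rho'$ coming from \lem{l:r-bounds} and \lem{l:rslope}. Adding the two pointwise bounds,
\[
|H\rho''(\cdot,t)|_\infty \leq |u''(\cdot,t)|_\infty + |e'(\cdot,t)|_\infty \leq C, \qquad t>0,
\]
and since $\T$ has finite measure this $L^\infty$ bound immediately upgrades to an $L^p$ bound for every $p\in[1,\infty)$.

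I expect no genuine obstacle here: the content is simply that, on the critical scale, $\aL_1 = -\L = -H\partial_x$ turns the already-established uniform bounds on $u''$ and $e'$ directly into a uniform bound on $H\rho''$, so the corollary is essentially a bookkeeping consequence of the preceding lemmas. The only mildly delicate point is that the clean identity $\L\rho' = H\rho''$ leans on the critical exponent $\alpha = 1$ in an essential way; for $\alpha \in (1,2)$ the same scheme would produce uniform control of $\L^\alpha \rho'$ rather than $H\rho''$, and one would have to add an elementary Riesz-potential step to recover the stated conclusion.
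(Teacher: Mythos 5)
Your proof is correct and is essentially identical to the paper's: both use the critical-case identity $H\rho'' = \L\rho' = u'' - e'$ together with \lem{l:vanslope2} and \eqref{e:ex}, and then pass from the uniform $L^\infty$ bound to uniform $L^p$ bounds on $\rho''$ by finiteness of the torus measure.
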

We have $|H\rho''|_\infty = |\L \rho '|_\infty \leq |e'|_\infty + |u''|_\infty$. So, the corllary simply follows from \lem{l:vanslope2}  and \eqref{e:ex}.  Since we are in the torus settings, this automatically implies uniform bound for all $L^p$-norms of $\rho''$, for $1\leq p < \infty$:
\begin{equation}\label{e:rLp}
\sup_t |\rho''(\cdot,t)|_p <\infty.
\end{equation}
In what follows we tacitly use these bounds by simply replacing uniformly bounded quantity such as above by constants. 

We are now in a position to perform final estimates in the top regularity class $H^3$. 

\begin{lemma} \label{l:vanslope3} We make the same assumptions stated in Theorem \ref{t:flock-singular}. There are constants $C, \d>0$ such that for all $t>0$ one has
\begin{equation}\label{ }
\begin{split}
|u_{xxx}(\cdot,t) |_2 & \leq Ce^{-\d t} \\
|\rho_{xxx}(\cdot,t)|_2 & \leq C.
\end{split}
\end{equation}
\end{lemma}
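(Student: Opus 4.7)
The strategy is to carry out coupled $L^2$ energy estimates on $u'''$ and $\rho'''$ by applying $\p_x^3$ to the parabolic forms of the two equations and pairing against $u'''$ and $\rho'''$. To expose the dissipative structure of the momentum equation, I will use the pointwise Leibniz-defect identity
\[
\L(\rho u)=\rho\L u+u\L\rho-R(\rho,u),\qquad R(\rho,u)(x):=\mathrm{p.v.}\int_\T\frac{(\rho(y)-\rho(x))(u(y)-u(x))}{|y-x|^2}\dy,
\]
to rewrite $u_t+uu_x+\rho\L u=R(\rho,u)$. For the density, I will use the parabolic form $\rho_t+u\rho_x+e\rho+\rho\L\rho=0$ already derived in the excerpt.

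The main dissipation terms $\int\rho\,\L u'''\cdot u'''\,\dx\geq c|\Lambda^{1/2}u'''|_2^2$ and $\int\rho\,\L\rho'''\cdot\rho'''\,\dx\geq c|\Lambda^{1/2}\rho'''|_2^2$, modulo Kato--Ponce commutator corrections absorbable via the established bound on $\|\rho\|_{W^{1,\infty}}$, combined with the Poincar\'e-type inequality $|f|_2\leq|\Lambda^{1/2}f|_2$ on $\T$ and the lower bound $\rho\geq c>0$ from \lem{l:r-bounds}, yield the key linear damping $-c|u'''|_2^2$ and $-c|\rho'''|_2^2$. All remaining contributions should be controlled via the established bounds: the transport $\int\p_x^3(uu_x)u'''\,\dx$ reduces after integration by parts to $\tfrac72\int u'(u''')^2\dx+3\int(u'')^2u'''\dx$, both small using $|u_x|_\infty+|u_{xx}|_\infty\leq Ce^{-\delta t}$; lower-order pieces of $\p_x^3(\rho\L u)$ are handled using the identities $\L u''=Hu'''$, $\L u'=u''-e'$, together with $|\L u|_\infty\leq Ce^{-\delta t}$ (from $\L u=Hu'$ and interpolation of $|u'|_\infty$ with $|u''|_\infty$), $|\rho'|_\infty$, $|H\rho''|_\infty$, $|e'|_\infty\leq C$, and $|\rho''|_p\leq C$ for $p<\infty$ (\cor{c:rxx}); and the bilinear remainder $\p_x^3 R$ is split into short- and long-range parts as in \lem{l:vanslope}--\lem{l:vanslope2}. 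The coupling between the two estimates, through terms like $\int\rho'''\L u\cdot u'''\dx\lesssim Ce^{-\delta t}|\rho'''|_2|u'''|_2$ and $\int e'''\rho\,\rho'''\dx$ (controlled via $e''=u'''-\L\rho''$), is closed through the coupled functional $\cE(t):=|u'''|_2^2+\kappa|\rho'''|_2^2$ for a suitably small $\kappa>0$, which should satisfy $\tfrac{d}{dt}\cE\leq-c\cE+Ce^{-\delta t}$. Gronwall then delivers exponential decay of $|u'''|_2$; the fact that $|\rho'''|_2$ only stays bounded rather than decays to zero reflects the nontrivial $H^3$-limiting profile $\bar\rho_\infty$.

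The principal obstacle lies in the Calder\'on-type commutator $\int\rho'\,\L u''\cdot u'''\dx=-\tfrac12\int[H,\rho']u'''\cdot u'''\dx$ coming from the subleading term in $\p_x^3(\rho\L u)$, whose naive Calder\'on estimate gives only a \emph{non-small} quadratic bound $C|u'''|_2^2$ that could overwhelm the linear dissipation. Overcoming this will require the Kato--Ponce refinement $\|[H,\rho']u'''\|_2\lesssim\|\rho''\|_p\|u'''\|_q$ with $1/p+1/q=1/2$, combined with the interpolation $\|u'''\|_q\lesssim\|u''\|_\infty^\theta\|u'''\|_2^{1-\theta}$ to extract exponential smallness from $|u''|_\infty\leq Ce^{-\delta t}$. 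The analogous commutator $\int\rho'\,\L\rho''\cdot\rho'''\dx$ in the density estimate, which cannot exploit exponential smallness, must instead be absorbed via a sufficiently large dissipation constant $c$, exploiting the uniform density positivity $\rho\geq c_0>0$.
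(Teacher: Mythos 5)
Your overall framework (apply $\p_x^3$ to the parabolic forms, test in $L^2$, exploit the small-amplitude enhanced dissipation, and close through a coupled energy) shares the paper's architecture, but there are two genuine gaps, and both trace back to the same structural miss: the paper never runs an $L^2$ estimate on $\rho'''$ directly. Instead it exploits the conservation law $e_t+(ue)_x=0$, which after differentiating twice gives $\ddt e''+ue'''+2u'e''+2u''e'+u'''e=0$ --- a pure transport with \emph{no dissipation and no non-small forcing}. Testing with $e''$ produces $\ddt|e''|_2^2\leq E|e''|_2^2+E+|u'''|_2|e''|_2$, and since $|u'''|_2^2+|\rho'''|_2^2\sim|u'''|_2^2+|e''|_2^2$, this substitutes for a $\rho'''$ estimate. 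Your proposed direct estimate on $\rho_t+u\rho_x+e\rho+\rho\L\rho=0$ runs into trouble at $\p_x^3(e\rho)$: the term $e'''\rho$ brings in $e'''=u''''-\L\rho'''$, i.e., the fourth derivative of $u$, which is not controlled. Integrating by parts pushes the derivative onto $\rho\rho'''$ and generates $\rho''''$, which is also out of reach. You flag this term as "controlled via $e''=u'''-\L\rho''$" but do not say how; the obvious manipulations all fail, and this is precisely why the paper works with $e''$ rather than $\rho'''$.

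The second gap is in the closing Gronwall step. You assert that the coupled functional $\cE(t)=|u'''|_2^2+\kappa|\rho'''|_2^2$ should satisfy $\ddt\cE\leq-c\cE+Ce^{-\delta t}$. If that held, Gronwall would force $|\rho'''|_2\to0$ exponentially --- but the lemma itself only claims $|\rho'''|_2\leq C$, and indeed the limiting profile $\bar\rho\in H^3$ is generically nontrivial, so $|\rho'''|_2$ cannot decay to zero. You note this tension in one sentence but do not resolve it, and it cannot be resolved: the $\rho'''$ (or $e''$) evolution simply has no mechanism producing a \emph{small} or \emph{negative} coefficient in front of $|\rho'''|_2^2$. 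The dissipation from $\rho\L\rho$ gives $-c_0|\Lambda^{1/2}\rho'''|_2^2$, but the forcing $\int e\,\rho'''\rho'''\dx$ contributes $+|e|_\infty|\rho'''|_2^2$, and $|e|_\infty$ is $O(1)$, not small --- there is no reason the former dominates. The paper's resolution is subtler: adding the $u'''$ and $e''$ inequalities gives $\ddt X\leq E+\e X$ for arbitrarily small $\e$, where $X=|u'''|_2^2+|e''|_2^2$. This only shows slow exponential \emph{growth} of $X$, hence of $|\rho'''|_2$. The final step is a bootstrap: insert this slow growth back into the $u'''$ inequality, where the problematic term $E|\rho'''|_2^2$ has $E$ decaying at a fixed rate and $|\rho'''|_2^2$ growing at an arbitrarily slow rate, so the product still decays exponentially, and then $\ddt|u'''|_2^2\leq E-B|u'''|_2^2$ closes. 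This two-step argument --- slow growth, then plug back --- is essential, and is absent from your plan.

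The remaining technical discrepancies (you propose the Leibniz-defect form $\L(\rho u)=\rho\L u+u\L\rho-R(\rho,u)$ with Kato--Ponce commutator estimates, where the paper keeps the commutator form $\aT(\rho,u)=-\L(\rho u)+u\L\rho$ and expands by Leibniz into $\aT(\rho''',u)+3\aT(\rho'',u')+3\aT(\rho',u'')+\aT(\rho,u''')$, handling each piece by the long/short-range splitting of Lemma \ref{l:large} and the symmetrization trick to move a derivative onto $\rho'$) are plausible alternative routes for the $u'''$ side, and could likely be made to work. But the two gaps above --- what to do with the $\rho'''$ equation and how to actually close the coupling when $|\rho'''|_2$ is not expected to decay --- are missing ideas, not just missing details.
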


First we need a universal bound on the large scale of a triple product, similar in spirit to \eqref{e:triple1}-\eqref{e:triple2} which we recast more generally to suit the context of \lem{l:vanslope3}.

\begin{lemma}\label{l:large} For any three $2\pi$-periodic function $f, g, h$ we have the following bound
\begin{equation}\label{e:large}
\left| \int_\T \int_{|z|\geq \pi}  \frac{f(x+z)( \d_z g (x) - z g'(x)) h(x)}{|z|^2} \dz \dx \right | \leq C  |f|_{p_1} |g'|_{p_2} |h|_{p_3}, 
\end{equation}
for any conjugate triple $\frac{1}{p_1} + \frac{1}{p_2} + \frac{1}{p_3} = 1$. 
\end{lemma}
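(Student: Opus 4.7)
The plan is to split the integrand along the natural decomposition $\d_z g(x)-z g'(x)$ into two pieces $I_1$ and $I_2$, and in both cases to unfold the exterior region $\{|z|\geq\pi\}$ as the disjoint union $\bigsqcup_{k\neq 0}(2\pi k+[-\pi,\pi])$. For $z = z'+2\pi k$ with $|z'|\leq \pi$ and $k\neq 0$, the $2\pi$-periodicity of $f$ and $g$ gives $f(x+z) = f(x+z')$ and $\d_z g(x) = g(x+z')-g(x) = \d_{z'}g(x)$; only the factor $zg'(x) = (z'+2\pi k)g'(x)$ retains explicit dependence on $k$, and since $(z'+2\pi k)/|z'+2\pi k|^2=1/(z'+2\pi k)$ the corresponding kernel in $I_2$ is $1/(z'+2\pi k)$.

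For the $\d_z g$ piece $I_1$, I would write $\d_{z'}g(x) = \int_0^{z'} g'(x+s)\ds$ and apply three-way H\"older in $x$ with the conjugate triple $(p_1,p_2,p_3)$ for fixed $s$ and $z'$, using that translation is an isometry on every $L^p(\T)$:
\[
\left|\int_\T f(x+z')\,g'(x+s)\,h(x)\dx\right| \leq |f|_{p_1}|g'|_{p_2}|h|_{p_3}.
\]
Integrating in $s$ over $[0,z']$ produces a factor $|z'|\leq\pi$, the weight satisfies $|z'+2\pi k|^{-2}\lesssim k^{-2}$, and so the outer sum $\sum_{k\neq 0}\int_{|z'|\leq\pi}|z'|\,|z'+2\pi k|^{-2}\dz'$ converges, yielding the claimed bound for $I_1$.

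For the $-zg'$ piece $I_2$, I would pull out $g'(x)h(x)$ (which are independent of $z$) and recognize the inner integral as a torus convolution $(f\ast\tilde K)(x)$, where
\[
\tilde K(z'):=\sum_{k\neq 0}\frac{1}{z'+2\pi k}=\sum_{k\ge 1}\frac{2z'}{(z')^2-4\pi^2 k^2}.
\]
The pairing $k\leftrightarrow -k$ is crucial: it turns the conditionally convergent series into an absolutely convergent one with terms bounded by $|z'|/k^2$, so $\tilde K$ is uniformly bounded on $[-\pi,\pi]$ and in particular $\tilde K\in L^1(\T)$. Young's convolution inequality on $\T$ then yields $|f\ast\tilde K|_{p_1}\leq C|f|_{p_1}$, and a final three-way H\"older with exponents $(p_1,p_2,p_3)$ in the outer $x$-integral closes the estimate.

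The main obstacle is the $I_2$ piece: its integrand $g'(x)/z$ decays only like $|z|^{-1}$ at infinity, so no absolute-value bound in $z$ alone can succeed, and the odd-in-$k$ cancellation coming from periodicity is essential. This is exactly the trick already exploited in \eqref{e:triple1}--\eqref{e:triple2} when proving \lem{l:vanslope}; the present lemma merely upgrades those $L^\infty$--$L^1$--$L^\infty$ bounds to arbitrary conjugate exponents by combining Young's inequality with three-way H\"older.
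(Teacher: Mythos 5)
Your proof is correct and follows essentially the same route as the paper: unfold $\{|z|\geq\pi\}$ via $2\pi$-periodicity into shifted copies of $[-\pi,\pi]$, use the mean-value representation of $\d_z g$ on the first piece, and exploit the $k\leftrightarrow -k$ cancellation to turn the conditionally convergent $1/(z'+2\pi k)$ sum into a bounded kernel on the second. The only cosmetic difference is that you package the second-piece estimate as Young's convolution inequality followed by three-way H\"older, whereas the paper bounds the summed kernel pointwise and applies three-way H\"older directly to the double integral over $\T^2$; the two are equivalent.
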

\begin{proof}
\[
\begin{split}
&\int_\T \int_{|z|\geq \pi}  \frac{f(x+z)( \d_z g (x) - z g'(x)) h(x)}{|z|^2} \dz \dx = \sum_{k \neq 0}  \int_\T \int_{|z|\leq \pi}  \frac{f(x+z) \d_z g (x)}{|z + 2\pi k|^2} \dz h(x) \dx \\
&+ \int_\T \int_{|z|\leq \pi}  \frac{f(x+z)}{z + 2\pi k} \dz\, g' (x) h(x) \dx \\
&\leq  \int_0^1  \int_{\T^2}  | f(x+z) g' (x+\th z) h(x) |  \dx \dz \dth  + \int_{\T} \int_{|z|\leq \pi}  f(x+z) g' (x) h(x) \sum_{k>0} \frac{2z}{|z|^2 - 4\pi^2 k^2} \dx \dz \\
&\leq  \int_0^1  \int_{\T^2}  | f(x+z) g' (x+\th z) h(x) |  \dx \dz \dth  + \int_{\T^2} | f(x+z) g' (x) h(x) |\dx \dz \\
&\leq C |f|_{p_1} |g'|_{p_2} |h|_{p_3} 
\end{split}
\]
for any conjugate triple $\frac{1}{p_1} + \frac{1}{p_2} + \frac{1}{p_3} = 1$. 
\end{proof}

\begin{proof}[Proof of \lem{l:vanslope3}]
Once we establish exponential decay of $|u'''|_2$, it would imply control over $|\rho'''|_2$ via $e$ as follows. Note that $e''$ satisfies
\[
\ddt e'' + u e''' + 2u' e'' + 2 u'' e' + u''' e = 0.
\]
Testing with $e''$ we obtain
\begin{equation}\label{e:exx}
\ddt |e''|_2^2 \leq  3 u' e''e'' + 2 u'' e' e'' + u''' e e'' \leq E(|e''|_2^2 + |e''|_2).
\end{equation}
This readily implies global uniform bound on $|e''|_2$, and hence on $|\rho'''|_2$.

Let us write the equation for $u'''$:
\begin{equation}\label{e:u'''}
u'''_t + u u'''_x + 4u'u'''+ 3u'' u'' = \aT(\r''',u)+ 3 \aT(\r'',u') + 3\aT(\r',u'') + \aT(\r,u''').
\end{equation}
Testing with $u'''$ we obtain (we suppress integral signs and note that $\int u''u''u''' = 0$)
\begin{equation}\label{e:eb}
\begin{split}
\ddt |u'''|_2^2 &= - 7 u'(u''')^2 + 2 \aT(\r''',u)u''' +6 \aT(\r'',u')u''' + 6 \aT(\r',u''))u''' + 2 \aT(\r, u''')u''' \\
& \leq E |u'''|_2^2 - c_0  \int \dD u''' \dx+2 \aT(\r''',u)u''' +6 \aT(\r'',u')u''' + 6 \aT(\r',u'')u'''.
\end{split}
\end{equation}
Note that $ \int \dD u''' \dx = |u'''|_{H^{1/2}}^2$.   As follows from \lem{l:enhance} we have the lower bound
\begin{equation}\label{e:DuxxxInt}
\int_\T \dD u''' \dx  \geq B |u'''|_2^2 - C(B) E, \text{ for any } B>0.
\end{equation}
Again, the dissipation absorbs all quadratic terms. Let us note that we cannot rely on the pointwise inequality $|e''| \lesssim |\rho''|$ since it requires regularity higher than $H^3$. Hence, the argument has to be genuinely $L^2$ based. We also point out that the argument of \cite{ST2016} is rough for the purposes of long time asymptotics.

 We have
\[
| \aT(\r''',u)u''' | = \int H \rho''' u' u''' \dx + \int_{\T^2} \frac{\rho'''(x+z)( \d_z u (x) - z u'(x)) u'''(x)}{|z|^2} \dz \dx.
\]
Clearly, $ | \int H \rho''' u' u''' \dx | \leq E |\rho'''|_2 |u'''|_2$. In view of \eqref{e:large}, the last integral in the range $|z| \geq \pi$ is bounded by the same $|\rho'''|_2 |u'''|_2 |u'|_\infty \leq E|\rho'''|_2 |u'''|_2$. In the range  $|z| \leq \pi$ we simply use $| \d_z u (x) - z u'(x) | \leq |z|^2 |u''|_\infty$. Thus, this part is also bounded by $E|\rho'''|_2 |u'''|_2$. We have proved 
\[
| \aT(\r''',u)u''' | \leq E|\rho'''|_2 |u'''|_2.
\]
Next,
\[
\aT(\r'',u')u''' =  \int_\T H \rho'' u'' u''' \dx + \int_{\T^2} \frac{\rho''(x+z)( \d_z u' (x) - z u''(x)) u'''(x)}{|z|^2} \dz \dx.
\]
In view of \cor{c:rxx}, $ \int_\T H \rho'' u'' u''' \dx  \leq E |u'''|_2 \leq E^2 + |u'''|_2^2$. Using \eqref{e:large}, we estimate the large scale of the integral by $|\rho''|_2 |u''|_\infty |u'''|_2 \leq E |u'''|_2$.  As to the small scale, we first observe
\[
| \d_z u' (x) - z u''(x) | = \left| \int_0^z (u''(x+w) - u''(x) ) \dx \right| \leq  \left( \int_0^z \frac{|u''(x+w) - u''(x) |^4}{|w|^4} \dw \right)^{1/4} |z|^{7/4}.
\]
Thus, 
\[
\begin{split}
&\left|  \int_{\T} \int_{|z|< \pi}  \frac{\rho''(x+z)( \d_z u' (x) - z u''(x)) u'''(x)}{|z|^2} \dz \dx \right| \\
&\leq  \int_{\T} \int_{|z|< \pi}  |\rho''(x+z)|\left( \int \frac{|u''(x+w) - u''(x) |^4}{|w|^4} \dw \right)^{1/4}| u'''(x)|  \dx |z|^{-1/4} \dz \\
&\leq |\rho''|_4 |u''|_{W^{3/4,4}} |u'''|_2 \leq  C |u'''|_{H^{1/2}}^{1/2} |u''|_\infty^{1/2} |u'''|_2 \leq E^4 + c_1  |u'''|_2^2 + \frac12 c_0 |u'''|_{H^{1/2}}^2,
\end{split}
\]
where in the last steps we used  Gagliardo-Nirenberg inequality and \cor{c:rxx}. All in all, we obtain
\[
| \aT(\r'',u')u''' | \leq E + c_2  |u'''|_2^2 + \frac14 c_0 |u'''|_{H^{1/2}}^2.
\]
Lastly, in the remaining the term $\aT(\r',u'')u'''$ we make one preparatory step in which we first move one derivative from $u$'s over onto $\rho'$.  To this end, we use symmetrization as follows
\begin{equation*}\label{}
\begin{split}
\aT(\r',u'')u''' & = \int \r'(y) u'''(x) (u''(y) - u''(x)) \frac{\dy \dx}{|x-y|^2} \\
&= \frac12 \iint (\r'(y) u'''(x) -\r'(x) u'''(y)) (u''(y) - u''(x)) \frac{\dy\dx}{|x-y|^2} \\
& = \frac12 \iint (\r'(y)-\r'(x)) u'''(x) (u''(y) - u''(x)) \frac{\dy \dx}{|x-y|^2} \\
&+ \frac12 \iint \r'(x) (u'''(x)-u'''(y))(u''(y) - u''(x)) \frac{\dy \dx}{|x-y|^2}\\
& =  \frac12 \iint \d_z \r'(x) u'''(x) \d_z u''(x) \frac{\dz \dx}{|z|^2} + \frac12 \iint \r'(x) \d_z u'''(x) \d_z u''(x) \frac{\dz \dx}{|z|^2}.
\end{split}
\end{equation*}
Thus, in the second term we have a full derivative $\d_z u'''(x) \d_z u''(x) = (( \d_z u''(x) )^2)'$. So,  integrating by parts, we obtain
\[
\begin{split}
\iint \r'(x) \d_z u'''(x) \d_z u''(x) \frac{\dz \dx}{|z|^2} &= - \frac12 \iint \r''(x) | \d_z u''(x) |^2\frac{\dz \dx}{|z|^2}  \leq c_3 |\rho''|_2 |u''|^2_{W^{3/4,4}}\\
& \leq  c_4 |u'''|_{H^{1/2}} |u''|_\infty \leq E +  \frac14 c_0 |u'''|_{H^{1/2}}^2.
\end{split}
\]
In the first term, we estimate, by \eqref{e:large},
\[
\begin{split}
\iint \d_z \r'(x) u'''(x) \d_z u''(x) \frac{\dz \dx}{|z|^2} & =\int \L \rho |u'''|^2 +  \iint \d_z \r'(x) u'''(x)( \d_z u''(x)- zu'''(x)) \frac{\dz \dx}{|z|^2} \\
& \leq |\L \r|_\infty |u'''|_2^2 + |\r'|_\infty |u'''|_2^2 \\
&+  \iint_{|z|<\pi} \d_z \r'(x) u'''(x)( \d_z u''(x)- zu'''(x)) \frac{\dz \dx}{|z|^2} \\
& \leq c_5 |u'''|_2^2 + |u'''|_{H^{1/2}} |u'''|_2 \leq c_6  |u'''|_2^2 + \frac14 c_0  |u'''|^2_{H^{1/2}}.
\end{split}
\]
Thus,
\[
| \aT(\r',u'')u''' | \leq E+ c_7|u'''|_2^2 + \frac12 c_0   |u'''|^2_{H^{1/2}}.
\]
In view of \eqref{e:DuxxxInt} the dissipation term absorbs all quadratic terms, and we arrive at 
\begin{equation}\label{e:uxxx}
\ddt |u'''|_2^2 \leq C(B) E - B |u'''|_2^2  + E|\rho'''|^2_2.
\end{equation}
Extra care is needed due to the last term since we don't know yet how fast $|\r'''|_2$ can grow. Let us get back to the ``e" term.  As before we have
\begin{equation}\label{e:exx2}
\begin{split}
\ddt |e''|_2^2 & \leq  3 u' e''e'' + 2 u'' e' e'' + u''' e e'' \leq E |e''|_2^2 + E + |u'''|_2 |e''|_2\\
& \leq E |e''|_2^2 + E + C(\e)|u'''|^2_2 + \e |e''|^2_2,
\end{split}
\end{equation}
for every $\e>0$.  Fix an arbitrarily small $\e>0$, and a pick large $B > 4 C(\e)$. Add the two equations \eqref{e:uxxx}, \eqref{e:exx2} together. Noting that $X = |u'''|_2^2 + |\rho'''|_2^2 \sim  |u'''|_2^2 +|e''|_2^2 \sim  |\rho'''|_2^2 +|e''|_2^2$, we obtain
\[
\ddt X \leq C(B) E + E X + \e X \lesssim  E+ \e X.
\]
This shows that $X$, and in particular $|\rho'''|_2$, grows at an arbitrarily small exponential rate $\e$. Using it back into \eqref{e:uxxx}, we see that in the product $E|\rho'''|^2_2$  the rate of exponential decay of $E$ is fixed and positive, yet that of $|\rho'''|_2$ is arbitrarily small. Hence the product decays exponentially, and we arrive at
\begin{equation}\label{e:uxxx2}
\ddt |u'''|_2^2 \leq E - B |u'''|_2^2.
\end{equation}
This proves the lemma.
\end{proof}

As a consequence we readily obtain the full statement of \thm{t:flock-singular}. Namely, \eqref{e:uxxxT} follows directly from \lem{l:vanslope3}, and the convergence for densities stated in \eqref{e:urxxx} follows by interpolation between exponential decay in $L^\infty$ and uniform boundedness in $H^3$. The fact that $\bar{\rho} \in H^3$ is simple consequence of uniform boundedness of $\rho(t)$ in $H^3$ and weak compactness.


\begin{thebibliography}{11}

\bibitem[CCTT2016]{CCTT2016}
Jos{\'e}~A. Carrillo, Young-Pil Choi, Eitan Tadmor, and Changhui Tan.
\newblock Critical thresholds in 1{D} {E}uler equations with non-local forces.
\newblock {\em Math. Models Methods Appl. Sci.}, 26(1):185--206, 2016.

\bibitem[CCP2017]{CCP2017}
J. Carrillo, Y.-P. Choi, and S. Perez,
A review on attractive-repulsive hydrodynamics
for consensus in collective behavior,
in ``Active Particles,  Volume 1. 
Advances in Theory, Models, and Applications''
(N. Bellomo, P. Degond and E. Tadmor, eds.),
Birkhäuser 2017.


\bibitem[CV2012]{CV2012}
Peter Constantin and Vlad Vicol.
\newblock Nonlinear maximum principles for dissipative linear nonlocal
  operators and applications.
\newblock {\em Geom. Funct. Anal.}, 22(5):1289--1321, 2012.


\bibitem[DKRT2017]{DKRT2017}
Tam Do, Alexander Kiselev, Lenya Ryzhik, and Changhui Tan
\newblock Global regularity for the fractional Euler alignment system,  arXiv:1701.05155.

\bibitem[HT2008]{HT2008}
S.-Y. Ha and E. Tadmor
{From particle to kinetic and hydrodynamic descriptions of flocking},
Kinetic and Related Models, 1, no. 3, (2008), 415--435.


\bibitem[ISV2016]{ISV2016}
C.~Imbert, R.~Shvydkoy, and F.~Vigneron.
\newblock Global well-posedness of a non-local {B}urgers equation: The periodic
  case.
\newblock to appear in {\em Annales math\'ematiques de Toulouse}, 2016.



\bibitem[KNV2008]{KNV2008}
Kiselev, A. and Nazarov, F. and Volberg, A.
\newblock Global well-posedness for the critical 2{D} dissipative
              quasi-geostrophic equation
 \newblock Invent. Math., 167(3) (2007) 445-453.
	

\bibitem[MT2011]{MT2011}
S. Motsch and E. Tadmor
\newblock A new model for self-organized dynamics and its flocking behavior, 
\newblock J. Stat. Physics 144(5) (2011) 923-947.

\bibitem[MT2014]{MT2014}
S. Motsch and E. Tadmor
\newblock Heterophilious dynamics enhances consensus.
\newblock SIAM Review 56(4) (2014) 577--621.


\bibitem[SS2016]{SS2016}
Russell~W. Schwab and Luis Silvestre.
\newblock Regularity for parabolic integro-differential equations with very
  irregular kernels.
\newblock {\em Anal. PDE}, 9(3):727--772, 2016.

\bibitem[ST2016]{ST2016}
R. Shvydkoy and E. Tadmor,
Eulerian dynamics with a commutator forcing,
arXiv:1612.04297.

\bibitem[TT2014]{TT2014}
Eitan Tadmor and Changhui Tan.
\newblock Critical thresholds in flocking hydrodynamics with non-local
  alignment.
\newblock {\em Philos. Trans. R. Soc. Lond. Ser. A Math. Phys. Eng. Sci.},
  372(2028):20130401, 22, 2014.
\end{thebibliography}
\end{document}